\numberwithin{equation}{section}
\theoremstyle{plain}
\newtheorem{theorem}[equation]{Theorem}
\newtheorem{lemma}[equation]{Lemma}
\newtheorem{proposition}[equation]{Proposition}
\newtheorem{corollary}[equation]{Corollary}
\theoremstyle{remark}
\newtheorem{remark}[equation]{Remark}
\theoremstyle{definition}
\newtheorem{definition}[equation]{Definition}
\newcommand{\bP}{\mathbb{P}}
\newcommand{\bQ}{\mathbb{Q}}
\newcommand{\bZ}{\mathbb{Z}}
\newcommand{\bF}{\mathbb{F}}
\newcommand{\bC}{\mathbb{C}}
\newcommand{\calC}{\mathcal{C}}
\newcommand{\calO}{\mathcal{O}}
\newcommand{\calJ}{\mathcal{J}}
\newcommand{\GL}{\mathrm{GL}}
\newcommand{\SL}{\mathrm{SL}}
\newcommand{\Sing}{\mathrm{Sing}}
\newcommand{\im}{\mathrm{Im}}
\newcommand{\rank}{\mathrm{rank}}
\newcommand{\git}{/\kern-0.2em/}
\newcommand{\defect}{\sigma(X)}
\newcommand{\et}{\textup{\'et}}
\newcommand{\Id}{\mathrm{Id}}
\author{Lisa Marquand}
\address{Lisa Marquand, Courant Institute,
  251 Mercer Street,
  New York, NY 10012, USA}
\email{lisa.marquand@nyu.edu}
\author{Sasha Viktorova}
\address{Sasha Viktorova, Mathematics Division, National Center for Theoretical Sciences, National Taiwan University, Taipei City 106, Taiwan}
    \email{sasha.viktorova@ncts.ntu.edu.tw}
\title[]{Cubic fourfolds containing highly singular hyperplane sections} 
\date{\today}
\begin{document}

\begin{abstract}
    We construct five irreducible divisors in the moduli space of complex cubic fourfolds parametrising smooth cubic fourfolds that contain highly singular hyperplane sections. We prove that each is not a Noether-Lefschetz (or Hassett) divisor utilising the computational method developed by Addington--Auel. 
\end{abstract}

\maketitle
\section{Introduction}
Divisors in the moduli space of smooth cubic fourfolds, denoted $\calC$, have received a lot of recent attention, due to their connection with rationality constructions. Indeed, the recent work of \cite{KKPY} implies that the locus of rational cubic fourfolds is contained in the union of the \textit{Hassett divisors}, a countable union of irreducible divisors that parametrise cubic fourfolds containing a surface non-homologous to a complete intersection. These divisors were introduced by Hassett in \cite{hassett}, and they parametrise so-called \textit{special} cubic fourfolds.
It is a complementary problem to find explicit equations for cubic fourfolds, away from these Hassett divisors, where the theory of Hodge atoms of \cite{KKPY} guarantees irrationality.

To our knowledge, there are few known non-Hassett divisors in the moduli space $\calC$. Ranestand--Voisin constructed three divisors $D_{V-ap},D_{rk3},D_{IR}$ parametrising cubic fourfolds apolar to certain surfaces, and a fourth divisor $D_{copl}$ parametrising coplanar cubics. They proved that $D_{V-ap}$ was non-Hassett by carefully analysing its singularities \cite{RV17}. On the other hand, Addington--Auel used computational techniques to prove that $D_{rk3}$ and $D_{IR}$ were non-Hassett as well \cite{AA}. Further, Costa--Harvey--Kedlaya used $p$-adic methods to show the same conclusion holds for $D_{copl}$ \cite{CHK}.  More recently, Sammarco used techniques similar to \cite{RV17} to identify another non-Hassett divisor \cite{sammarco25}. It is shown in \cite{RV17} that $D_{V-ap}$ is different from $D_{rk3}$. However, it is not known if the other non-Hassett divisors, including the ones that we define in this work, are different.

In this paper, we consider smooth cubic fourfolds that have a hyperplane section that is more singular than expected. We define $D_{E_6}, D_{D_6}\subset \calC$ to be the locus parametrising smooth cubic fourfolds containing a hyperplane section $Y:=X\cap H$ with an $E_6$ or $D_6$ singularity, respectively. We prove:

\begin{theorem}\label{Main theorem}
    The loci $D_{E_6}, D_{D_6}\subset \calC$ are irreducible divisors in $\calC.$ Further, both divisors are not Hassett divisors.
\end{theorem}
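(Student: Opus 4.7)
The plan for the first part (irreducibility and divisor) is to pass to the incidence variety
\[ \widetilde{D}_T = \bigl\{ (X, H, p) \in \bP^{55} \times (\bP^5)^{*} \times \bP^5 \,\big|\, p \in X \cap H \text{ and } X \cap H \text{ has a } T\text{-singularity at } p \bigr\} \]
for $T \in \{E_6, D_6\}$ and project it to $\calC$. I would first fiber $\widetilde{D}_T$ over the flag variety $B = \{(H,p) : p \in H\}$, which is irreducible of dimension $9$. For fixed $(H,p) \in B$, the fiber in $\bP^{55}$ is cut out by conditions on $f|_H$: five linear conditions for $X \cap H$ to be singular at $p$, plus $\mu(T) - 1 = 5$ further conditions fixing the singularity type (as $\mu(E_6) = \mu(D_6) = 6$). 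These lift via the surjection $H^0(\bP^5, \calO(3)) \twoheadrightarrow H^0(H, \calO(3))$ to ten conditions on $f$, yielding an irreducible fiber of dimension $45$. Hence $\widetilde{D}_T$ is irreducible of dimension $54$; to conclude that the image $D_T \subset \calC$ is an irreducible divisor (dimension $19$) one exhibits a single smooth cubic fourfold $X_0 \in D_T$ admitting only finitely many pairs $(H,p)$ giving $T$-singular hyperplane sections, so that the forgetful projection is generically finite.

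For the second part (not Hassett), I would follow the Addington--Auel strategy \cite{AA}: exhibit a smooth cubic fourfold $X_T$ defined over $\bQ$ in $D_T$ whose integral lattice of $(2,2)$-classes equals $\bZ \cdot h^2$. Since every Hassett divisor $\calC_d$ consists of cubic fourfolds of algebraic rank at least $2$, this immediately shows $D_T$ is not equal to any Hassett divisor. To construct $X_T$, write down a cubic threefold $Y_T \subset \bP^4$ over $\bQ$ with a $T$-singularity at $[1:0:0:0:0]$, using a rational local normal form plus generic perturbations to avoid extraneous singularities, and extend $Y_T$ to a smooth cubic fourfold $X_T \subset \bP^5$ containing it as a hyperplane section. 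To verify that the algebraic rank of $X_T$ equals $1$, pick a small prime $p$ of good reduction, count $X_T(\bF_{p^k})$ for $k \leq 4$, use the Grothendieck--Lefschetz trace formula to extract the characteristic polynomial of Frobenius on $H^4_{\et}(X_{T,\bar{\bF}_p}, \bQ_\ell)$, and check that exactly one eigenvalue is of the form $\zeta \cdot p^2$ with $\zeta$ a root of unity --- the eigenvalue attached to $h^2$. Since algebraic rank can only grow under specialization, this bound transfers to characteristic zero.

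The main obstacle is the point-count computation itself: naive enumeration over $\bP^5(\bF_{p^k})$ is prohibitively heavy, so one leverages the fibration-based optimization developed in \cite{AA} to render the counts tractable for small $p$. Conceptually, the chief risk is that the $T$-singular geometry may silently force an extra algebraic $(2,2)$-class on $X_T$: for example, the exceptional divisors of a crepant resolution $\widetilde{Y}_T \to Y_T$ give natural algebraic surfaces on $Y_T$, and one needs the zeta-function computation to certify that no such cycle contributes a new Hodge class on $X_T$. This verification must be carried out independently for the $E_6$ and $D_6$ cases.
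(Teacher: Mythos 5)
Your overall architecture is viable, and the first half genuinely diverges from the paper. For irreducibility and the dimension count the paper does not use an incidence variety over the flag $\{(H,p):p\in H\}$: it instead imports explicit normal forms $f_K$ for $E_6$- and $D_6$-cubic threefolds from the classification of singular cubic threefolds (Proposition \ref{prop: equations}), observes that the locus $W_K$ of cubics $x_5q+f_K$ is an affine space, hence irreducible, hence so is its image $D_K$, and computes $\dim D_K=(21+p_K)-d_K$, where $d_K$ is the dimension of the intersection of $W_K$ with a general $\GL(6)$-orbit, determined by explicitly describing the matrices preserving the normal form. Your count ($9+45=54$ in $\bP^{55}$) reaches the same numbers more intrinsically, but two points need attention. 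First, the fibre over a fixed $(H,p)$ being irreducible of the expected codimension $5+(\mu-1)=10$ is not automatic: you need the linear system of cubics on $H$ singular at $p$ to induce a versal deformation of the $E_6$/$D_6$ germ, together with smoothness and irreducibility of the corresponding ADE stratum; this is true but must be argued. Second, and more importantly, both routes hinge on the same nontrivial input, which you state but do not prove: that some (hence a general) smooth $X_0\in D_T$ has only finitely many $T$-singular hyperplane sections. This is the actual technical heart of the paper's argument (Proposition \ref{prop: injectivity} and Corollary \ref{E6 discrete fibers}): one checks by computer, for an explicit example, that the global-to-local map $H^0(Y,\calO(1))\to\bigoplus_{p}T^1_{Y_p}$ is injective, and then uses that ADE singularities have modality zero, so a positive-dimensional family of $T$-sections would contradict injectivity. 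Without an argument of this kind your projection could have positive-dimensional fibres and the image could fail to be a divisor, so ``one exhibits a single $X_0$'' is precisely the step that cannot be waved through.

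For the non-Hassett statement you follow the same Addington--Auel strategy as the paper (reduction mod a small prime, point counts, characteristic polynomial of Frobenius, specialization of the algebraic rank), and your logic for why one non-special member suffices is correct. One concrete error: counting points over $\bF_{p^k}$ only for $k\le 4$ does not determine the degree-$22$ characteristic polynomial of Frobenius on $H^4_{\et,prim}$; even exploiting the functional equation one needs the power sums up to $k=11$, which is exactly the range the paper computes. With only four counts you cannot certify that no primitive eigenvalue is a root of unity times $p^{2}$, so the verification as written would not go through.
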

In fact, one can write down the general form of an equation for a cubic contained in either $D_{E_6}, D_{D_6}$ (see \Cref{prop: equations} and Equation \ref{eqn: fourfold}), and thus applying \cite{KKPY} we can write down examples of irrational cubic fourfolds.

If $X$ is a general cubic fourfold, then necessarily all hyperplane sections have total Tjurina number (or total Milnor number) $\leq 5$ \cite[Lemma 3.8]{LSV}. 
Thus it is natural to consider hyperplane sections with total Tjurina number 6. 
A hyperplane section with six nodes is such an example, however cubic fourfolds with such a hyperplane section are contained in either the Hassett divisor $\calC_{12}$ (containing a cubic scroll) by \cite{flops}, or $\calC_{8}$ (containing a plane) if the singularities are not in general position. 
The condition for a cubic threefold $Y$ (necessarily singular) to contain either a cubic scroll or a plane was further studied in \cite{MV25}, where we showed this condition is equivalent to the positivity of the \textit{defect}: $\sigma(Y):=b_2(Y)-b_4(Y)>0$. 
The defect $\sigma(Y)$ is a global invariant of the cubic threefold, which can be computed using both the position of the singularities and the local numerical invariants. 
In particular, in order to obtain divisors away from $\calC_8\cup \calC_{12},$ we restrict our attention to cubic fourfolds containing singular hyperplane sections with $\sigma(Y)=0.$

With that in mind, we can consider loci $D_K\subset \calC$ parametrising cubic fourfolds with a hyperplane section $Y$ with multiple singularities of type $K=\sum_{i=1}^rK_i$, with each $K_i$ of $ADE$ type, and total Tjurina number 6. 
One would expect to find more non-Hassett divisors this way, as long as $\sigma(Y)=0$. 
We prove this is the case:

\begin{theorem}\label{Main theorem2}
\begin{spacing}{1.25}
    The loci $D_{D_5+A_1}, D_{D_4+A_2}\subset \calC$ are irreducible divisors, and $D_{D_4+2A_1}$ has two irreducible divisoral components $D_{D_4+2A_1}^0, D_{D_4+2A_1}^1,$ indexed by the value of $\sigma(Y)$. Further, the divisors $D_{D_5+A_1}, D_{D_4+A_2}, D_{D_4+2A_1}^0$ are non-Hassett divisors, while $D_{D_4+2A_1}^1=\calC_8.$
\end{spacing}
\end{theorem}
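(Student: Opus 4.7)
The plan is to mirror the proof strategy of \Cref{Main theorem}. For each singularity type $K \in \{D_5+A_1,\, D_4+A_2,\, D_4+2A_1\}$, we write an explicit normal form for a cubic threefold $Y \subset \bP^4$ with singularities of type $K$ by placing the singular points at fixed coordinate points and using the $\PGL_5$-action to reduce the local equations to Arnold normal form. A parameter count for such $Y$, together with the extension to a cubic fourfold $X$ with $Y = X \cap H$, then shows each $D_K$ has codimension $1$ in $\calC$. Irreducibility of $D_{D_5+A_1}$ and $D_{D_4+A_2}$ follows once one verifies that, modulo $\PGL_5$, the locus of cubic threefolds with the prescribed singularity configuration is a Zariski-open subset of an affine space --- equivalently, that after fixing the positions and tangent cones at the singular points the remaining coefficients of $Y$ vary freely.

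For $D_{D_4+2A_1}$, after normalising the $D_4$ point and exploiting residual symmetries the two $A_1$ points trace out an irreducible family, but this family splits into two strata according to whether the projective geometry of the three singular points (their tangent cones and the lines joining them) forces $Y$ to contain a plane. By \cite{MV25} this distinction is detected exactly by the defect: $\sigma(Y) = 0$ on the general stratum and $\sigma(Y) > 0$ on the exceptional one. Each stratum is irreducible by a direct parameter count, yielding $D_{D_4+2A_1}^0$ and $D_{D_4+2A_1}^1$. On the latter, $Y$ contains a plane which lifts to a plane in $X$, so $D_{D_4+2A_1}^1 \subseteq \calC_8$; a dimension count together with irreducibility of $\calC_8$ then upgrades this to equality.

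To prove that $D_{D_5+A_1}$, $D_{D_4+A_2}$, and $D_{D_4+2A_1}^0$ are non-Hassett, we adapt the computational technique of Addington--Auel \cite{AA}. Using the normal forms above, for each divisor we produce a cubic fourfold $X$ over $\bQ$ lying in the divisor with good reduction at two primes $p_1, p_2$. Counting $\bF_{p_i^k}$-points on $X$ yields the Frobenius eigenvalues on $H^4_{\et}(X_{\overline{\bF}_{p_i}}, \bQ_\ell)$, and if at both primes the only algebraic classes are multiples of $h^2$, then a standard specialisation argument forces the algebraic part of $H^4(X,\bZ)$ in characteristic zero to equal $\bZ \cdot h^2$, so that $X$ --- and hence the general member of the divisor --- lies in no Hassett divisor $\calC_d$. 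The main obstacle will be locating rational cubic fourfolds of each type with sufficient good reduction and arithmetically independent Frobenius data at the chosen primes; however, the freedom in the normal forms makes this tractable, and the resulting calculation is entirely analogous to the ones carried out in \cite{AA} for $D_{rk3}$ and $D_{IR}$.
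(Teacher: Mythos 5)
Your overall strategy coincides with the paper's: normal forms for $K$-cubic threefolds, a parameter count to establish codimension one and irreducibility, the defect dichotomy splitting $D_{D_4+2A_1}$ into two components with the $\sigma=1$ component landing in (and equalling) $\calC_8$, and Addington--Auel point counting for the non-Hassett claims. However, there is a genuine gap in your dimension count. To pass from the dimension of the space of equations $x_5q+f_K$ (or of pairs $(X,H)$) to the codimension of $D_K$ in $\calC$, you must control the fibres of the projection to moduli: you need to know that a general cubic fourfold in the family carries only \emph{finitely many} $K$-hyperplane sections, since a positive-dimensional family of such sections would drop the dimension of the image. Your proposal never addresses this, and ``the remaining coefficients vary freely'' is not a substitute. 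The paper handles it in Proposition \ref{prop: injectivity} by verifying, on explicit examples and using openness of injectivity, that the global-to-local map $\varphi: H^0(Y,\calO(1))\rightarrow \bigoplus_{p\in\Sing(Y)} T^1_{Y_p}$ is injective; combined with the fact that $ADE$ singularities have modality zero, this yields finiteness (Corollary \ref{E6 discrete fibers}). The other half of the count you leave implicit is the explicit computation of the stabilizer subgroups $H_{u_K}\subset \GL(5)$ of the normal forms, which is where the fibre dimensions $d_K$ actually come from, and which distinguishes the $8$-dimensional stabilizer for $D_4+2A_1$, $\sigma=1$ from the $6$-dimensional one for $\sigma=0$.

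On the non-Hassett part, your two-prime van Luijk-style argument is workable but heavier than what the paper does. Following \cite{AA}, a single prime ($p=2$) suffices: one only needs the rank of $H^{2,2}_{prim}$ to vanish, and this follows as soon as the characteristic polynomial of Frobenius on $H^4_{\et,prim}$ of the reduction has no cyclotomic factor --- e.g.\ is irreducible over $\bQ$ with a non-integral coefficient. The two-prime discriminant comparison is only needed when each single prime gives an upper bound strictly larger than the target rank, which is not the case here.
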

\vspace{-5mm}
We will also consider the locus $D_{T_{333}}\subset \calC$ that parametrises smooth cubic fourfolds containing a hyperplane section $Y$ with a $T_{333}$ singularity. This is a unimodular singularity of corank $3$ and Tjurina number $8$. One then expects this locus to be codimension $2$ in moduli, which we verify:
\begin{proposition}\label{prop: corank 3}
    The locus $D_{T_{333}}\subset \calC$ is irreducible of codimension 2. Further, it is not contained in a Hassett-divisor. 
\end{proposition}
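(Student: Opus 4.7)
The plan is to realise $D_{T_{333}}$ as the image of an incidence correspondence and then invoke the computational method of \cite{AA} to rule out containment in any Hassett divisor. The strategy parallels that of Theorems \ref{Main theorem} and \ref{Main theorem2}, with the Tjurina number $\tau=8$ forcing the codimension to jump from $1$ to $2$.

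\textbf{Step 1 (Incidence variety and codimension).} Recall that $T_{333}$ is the unimodal singularity $\widetilde E_6=P_8$ of corank $3$, with local normal form $x^3+y^3+z^3+\lambda xyz$ for all but finitely many $\lambda\in\bC$. A cubic threefold $Y\subset\bP^4$ has a $T_{333}$ singularity at a point $p$ precisely when, in local affine coordinates at $p$, the defining equation $f$ has vanishing $1$-jet, Hessian of rank $1$, and cubic part $f_3$ whose restriction to the kernel of the Hessian is a smooth plane cubic. On the $35$-dimensional space of cubics on $H\cong\bP^4$, the first condition gives $5$ linear equations, the rank condition on the symmetric $4\times 4$ Hessian is determinantal of codimension $\binom{4}{2}=6$, and the last is a nonempty Zariski-open condition. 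Thus, for fixed $(H,p)$, the cubics $X\in\bP^{55}$ with $X|_H$ singular of type $T_{333}$ at $p$ form a codimension-$11$ subvariety. Non-emptiness (and smoothness of a general such $X$) is witnessed by the model
\[
F=u\cdot Q(x,y,z,w,t,u)+tw^{2}+h(x,y,z),
\]
with $h$ a smooth plane cubic and $Q$ a generic quadric, taking $H=\{u=0\}$ and $p=[0{:}0{:}0{:}0{:}1{:}0]$. Forming the incidence variety
\[
\calI=\{(X,H,p)\in\bP^{55}\times(\bP^5)^{\vee}\times\bP^5\ :\ p\in H,\ X\cap H\text{ has }T_{333}\text{ at }p\}
\]
and projecting to the $9$-dimensional flag $\{(H,p):p\in H\}$ exhibits $\calI$ as an open subset of a $\bP^{44}$-bundle, hence irreducible of dimension $53$. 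A first-order deformation argument using that the miniversal deformation of $T_{333}$ has a single equisingular direction shows that the other projection $\calI\to\bP^{55}$ is generically finite onto its image. Descending to the $\PGL_6$-quotient $\calC$ yields an irreducible subvariety of dimension $18$, i.e.\ of codimension $2$.

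\textbf{Step 2 (Non-containment in Hassett divisors).} Since $D_{T_{333}}$ is irreducible and each Hassett divisor $\calC_d$ is Zariski closed in $\calC$, it suffices to exhibit a single smooth $X\in D_{T_{333}}$ with $\sH^{2,2}(X,\bZ)=\bZ\cdot h^2$. Following \cite{AA}, I would specialise the equation above to a cubic over $\bQ$, verify smoothness, and reduce modulo two primes $p_1,p_2$ of good reduction. For each $p_i$, point counts $|X(\bF_{p_i^k})|$ for $k\le 3$ determine the characteristic polynomial of Frobenius on $\sH^4_{\et}(X_{\overline{\bF_{p_i}}},\bQ_\ell)$; the number of eigenvalues of the form $p_i\zeta$ with $\zeta$ a root of unity bounds $\rank \sH^{2,2}_{\mathrm{alg}}$ above in characteristic $p_i$, hence also $\rho(X_\bC)$. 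Comparing the two characteristic polynomials as in \cite{AA} rules out complex algebraic classes beyond $h^2$, giving $\rho(X_\bC)=1$.

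The principal difficulty is the explicit computation in Step 2: one must locate an $X/\bQ$ in $D_{T_{333}}$ that is smooth, has good reduction at small primes, and whose two reductions yield the desired rank bound. The one-parameter modulus $\lambda$ of the $T_{333}$ singularity together with the $21$-dimensional space of completing quadrics $Q$ leaves ample freedom to find such a cubic, but in practice one must iterate over candidates on a computer algebra system before the arithmetic genericity condition is met.
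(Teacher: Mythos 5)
Your overall architecture (dimension count via an incidence correspondence, then a point-counting certificate for non-specialness) matches the paper's, and your codimension bookkeeping in Step 1 is consistent with the paper's count ($53$ in $\bP^{55}$, hence $18=20-2$ in $\calC$). But the crux of Step 1 is exactly the sentence you pass over: ``a first-order deformation argument \dots shows that $\calI\to\bP^{55}$ is generically finite.'' This does not follow formally from $T_{333}$ having a one-dimensional equisingular stratum. The first-order deformations of $Y=X\cap H$ inside $X$ are cut out by the $5$-dimensional image of the global-to-local map $\varphi:H^0(Y,\calO(1))\to T^1_{Y_p}\cong\bC^8$, and finiteness of the fiber requires both that $\varphi$ be injective \emph{and} that $\im(\varphi)$ avoid the equisingular line (which sits in weight $1$, spanned by the class of $x_4^3$ in the paper's normalization). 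A priori the geometry could force that line into the image; the paper's Lemma 3.3 settles this only by exhibiting an explicit smooth $X$, computing the matrix of $\varphi$ against a monomial basis of the Tjurina algebra, and checking that the generator of the modular direction is not in the column span. Without that verification (or some substitute genericity argument for the actual map $l\mapsto l\cdot\partial_5F|_{x_5=0}\bmod J_g$, not a generic $5$-plane in $\bC^8$), your finiteness claim, and hence the codimension, is unproved. (Minor: the fiber of $\calI$ over $(H,p)$ is not a linear $\bP^{44}$ --- the corank condition is a determinantal, not linear, condition --- though it is still irreducible of that dimension, so irreducibility survives.)

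In Step 2 your reduction to exhibiting one non-special member is correct, but the computational scheme as written would not run. The characteristic polynomial of Frobenius on the $22$-dimensional $H^4_{\mathrm{prim}}$ requires point counts over $\bF_{p^k}$ for $k$ up to $11$ (the functional equation halves $22$ unknowns), not $k\le 3$; and for classes in $H^{2,2}$ the relevant eigenvalues are $p^2\zeta$, not $p\zeta$ (or simply roots of unity after the Tate twist $\bQ_\ell(2)$). More substantively, the two-prime van Luijk comparison is unnecessary here: since the target rank of $H^{2,2}_{\mathrm{prim}}$ is $0$ (an even number), a single prime suffices, and the paper uses Addington--Auel's criterion that an irreducible $\chi(t)$ with a non-integral coefficient has no cyclotomic factor. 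Your version could be made to work, but as stated it imports the wrong variant of the method and under-specifies the data needed to determine $\chi(t)$.
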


Our interest in singularities of hyperplane sections of a cubic fourfold $X\subset \bP^5$ also stems from the study of compactified intermediate Jacobian fibrations $\pi:\calJ_X\rightarrow (\bP^5)^*$, first constructed in \cite{LSV} for general $X$. These are examples of hyperk\"ahler manifolds of $OG10$ type, and for a general $H\in (\bP^5)^*$ the fiber of $\pi$ is the intermediate Jacobian of the cubic threefold $X\cap H$. Although existence of such a hyperk\"ahler compactification is guaranteed for \textit{any} cubic by results of Sacc\'a \cite{sac2021birational}, it is non constructive. Recently, \cite{DM26} extend the construction of \cite{LSV} to \textit{very good cubic fourfolds} - cubics not containing a plane, a cubic scroll, or admitting a hyperplane section with a corank 3 singularity. Since the locus of cubic fourfolds admitting a hyperplane section with a corank 3 singularity is the closure of $D_{T_{333}}$ (see \Cref{non-general}), our results imply the following:
\begin{corollary} \label{verygood}
    A cubic fourfold $X\in \calC\setminus (\calC_8\cup\calC_{12}\cup D_{T_{333}})$ is very good.
    
    In particular, for a cubic $X\in \calC\setminus (\calC_8\cup\calC_{12}\cup D_{T_{333}})$, the construction of \cite{DM26} produces a smooth projective $OG10$-type hyperk\"ahler $\calJ_X$ with a Lagrangian fibration $\pi: \calJ_X\rightarrow (\bP^5)^*$ whose fibers are compactified Prym varieties of reduced, irreducible planar curves.
\end{corollary}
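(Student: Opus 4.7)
The plan is to assemble the corollary from three identifications of loci inside $\calC$, followed by a direct invocation of the construction of \cite{DM26}. There is no new geometric input beyond what is already in the paper; the substantive content sits in \Cref{non-general}.

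First I would invoke the standard identification of $\calC_8$ as the locus of smooth cubic fourfolds containing a plane, and of $\calC_{12}$ (by \cite{flops}) as the locus containing a cubic scroll. These take care of two of the three configurations excluded in the definition of \textit{very good} from \cite{DM26}. For the remaining configuration --- existence of a hyperplane section with a corank $3$ singularity --- I would apply \Cref{non-general}, which identifies this locus with the closure of $D_{T_{333}}$. Combining, any $X \in \calC \setminus (\calC_8 \cup \calC_{12} \cup \overline{D_{T_{333}}})$ fails all three defining properties of a non very good cubic, which is exactly the first assertion. In the statement as written, one replaces $\overline{D_{T_{333}}}$ by $D_{T_{333}}$ after observing that every corank $3$ singularity appearing on a hyperplane section of a smooth cubic is of type $T_{333}$, which is implicit in \Cref{non-general} together with \Cref{prop: corank 3}.

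The second assertion is then immediate: one feeds $X$ into the main construction of \cite{DM26}, which takes a very good cubic as input and produces a smooth projective hyperk\"ahler manifold $\calJ_X$ of $OG10$ type equipped with a Lagrangian fibration $\pi: \calJ_X \to (\bP^5)^*$ whose fibers over $H \in (\bP^5)^*$ are, by construction, compactified Prym varieties of the reduced, irreducible planar discriminant curves associated to $X \cap H$. Since every ingredient is already available in the paper or the cited literature, the only real point to check is bookkeeping: verifying that the three loci excluded in the hypothesis correspond \emph{exactly} (not just up to closure) to the three configurations forbidden in the definition of very good from \cite{DM26}. Given \Cref{non-general}, this is the main --- and only --- obstacle.
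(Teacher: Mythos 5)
Your proposal is correct and follows the same route as the paper, which likewise deduces the corollary directly from the identifications of $\calC_8$, $\calC_{12}$, and (via \Cref{non-general}) the corank-$3$ locus as the closure of $D_{T_{333}}$, followed by an appeal to the construction of \cite{DM26}. The only cosmetic difference is that the paper handles the closure issue by defining $D_{T_{333}}$ as the image of a closed locus, whereas you argue it via the singularity classification; both are fine.
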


\subsection*{Outline}
We recall necessary background on cubic fourfolds, and singular cubic threefolds in Section \ref{sec: prelims}. In Section \ref{sec: construction}, we prove that the divisors described above are indeed irreducible divisors in the moduli of smooth cubic fourfolds. We obtain results on the deformations of singular hyperplane sections which may be of independent interest (Proposition \ref{prop: injectivity} and Lemma \ref{lemma: T333}). We complete the proofs of Theorems \ref{Main theorem} and \ref{Main theorem2} in Section \ref{sec: non Hassett} by utilising the computational methods of Addington--Auel. To implement their method to prove our divisors are non Hassett, we use code developed in \cite{AKPW, F2code}.

\subsection*{Funding}
The first author was supported by NSF grant DMS-2503390.

\subsection*{Acknowledgments} We would like to thank both Asher Auel and Jack Petok for introducing us to the work \cite{AKPW} and for explaining how to use the accompanying code \cite{F2code}.
We thank Franco Giovenzana, Dmitry Kubrak, Radu Laza, Grisha Papayanov, Elena Sammarco and Tim Seynnaeve for helpful discussions.

\section{Preliminaries}\label{sec: prelims}

Throughout, $X\subset \bP^5$ denotes a smooth cubic fourfold defined over $\bC.$ We denote by $\calC$ the moduli space of smooth cubic fourfolds, constructed using GIT \cite{Laz09}. Recall the following definition \cite{hassett}:

\begin{definition}
    A cubic fourfold $X$ is \textit{special} if it admits an algebraic cycle of codimension $2$ not homologous to a complete intersection.
\end{definition}

Equivalently, a smooth cubic fourfold $X$ is not special if and only if the algebraic lattice $A(X):=H^{2,2}(X)\cap H^4(X,\bZ)$
is generated by the class $h^2$, where $h\in H^{1,1}(X)$ is the hyperplane class.

Our goal is to construct divisors in $\calC$ parametrising geometrically relevant but not special cubic fourfolds. We will do so by parametrising cubics containing a hyperplane section with specific singularities. For a general hyperplane section $Y:=X\cap H$ of a general cubic fourfold $X$, we have that $Y$ is a smooth cubic threefold.
As $H$ varies, the hyperplane section can admit finitely many isolated singularities. Possible combinations of isolated singularities on cubic threefolds were classified in \cite{viktorova2023classification}. We recall some singularity theory definitions and notation.

\begin{definition}\label{def: Tjurina}
    For a germ of an isolated hypersurface singularity $(V(g),0)\subset (\bC^n,0)$ defined by a polynomial equation $g=0$, the Jacobian ideal of $g$ is
    \[
    J_g=\langle \partial g/\partial x_1,\dots, \partial f/\partial x_n\rangle\subset \bC[[x_1,\dots x_n]].
    \]
    The \textit{Tjurina algebra} of $(V(g),0)$ is defined to be
    \[
    T_g=\bC[[x_1,\dots x_n]]/\langle g,J_g\rangle,
    \]
    and the \textit{Tjurina number} is the dimension of the underlying vector space of $T_g$
    \[
    \tau(g)=\dim T_g.
    \]
\end{definition}
The Tjurina algebra of a hypersurface singularity $(V(g),0)$ is the same as the space of the first order deformations $T^1_{(V(g),0)}$ of $(V(g),0)$, and its basis generates a miniversal deformation of $(V(g),0)$ with a linear base space (see e.g. \cite[Corollary 1.17]{GLS}). When we consider the germ of an isolated singularity $p\in \Sing(Y)$, we denote $Y_p:=(Y,p).$
We denote by 
$$\tau(Y):=\sum_{p\in \Sing(Y)} \tau(Y_p)$$
the \textit{total Tjurina number} of a cubic threefold $Y$, where $\tau(Y_p)$ is the Tjurina number of $Y_p$.

The possible singularities of hyperplane sections of a general cubic $X$ were studied in connection to intermediate Jacobian fibrations by \cite{LSV}. In particular, they prove the following:

\begin{proposition} \cite[Lemma 3.8]{LSV}
    Let $X\subset\bP^5$ be a general cubic fourfold, and let $Y$ be any hyperplane section of $X$. Then the natural morphism
    \[
        \varphi: H^0(Y, \calO(1))\rightarrow \prod_{p\in\Sing  Y} T^1_{Y_p}
    \]
    is surjective. In other words, the family of deformations of $Y$ in $X$ induces a versal deformation of the singularities of $Y$.
\end{proposition}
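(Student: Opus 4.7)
The plan is to compute $\varphi$ explicitly in coordinates, reduce its surjectivity to a purely linear-algebraic map via the smoothness of $X$, and verify the reduced statement for general $X$ by a stratification of $\calC \times (\bP^5)^*$ by singularity type.

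First I would fix homogeneous coordinates so $H = V(x_0)$ and decompose $F = x_0 A + F_0$, with $F_0 := F|_H$ the equation of $Y \subset H \cong \bP^4$ and $A$ a quadratic form on $\bP^5$. Parametrizing nearby hyperplanes by $H_\ell := V(x_0 - \ell)$ for $\ell \in H^0(H, \calO(1))$ and pulling back $F$ to $H$ produces
\[
F(\ell, x_1, \ldots, x_5) = F_0 + \ell \cdot G + O(\ell^2), \qquad G := A|_H = \left.\frac{\partial F}{\partial x_0}\right|_H \in H^0(H, \calO(2)).
\]
The sequence $0 \to \calO_H(-2) \to \calO_H(1) \to \calO_Y(1) \to 0$ together with $H^{\le 1}(\bP^4, \calO(-2)) = 0$ identifies $H^0(Y, \calO_Y(1)) \cong H^0(H, \calO_H(1))$, and under this identification $\varphi(\ell) = \bigl([\ell \cdot G]_p\bigr)_{p \in \Sing Y}$ in the Tjurina algebras $T^1_{Y_p} = \calO_{H, p}/(F_0, J_{F_0})$.

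Next I would perform the key reduction. At each $p \in \Sing Y$ the relations $\partial F_0/\partial x_i(p) = 0$ for $i = 1, \ldots, 5$ combined with smoothness of $X$ force $G(p) = (\partial F/\partial x_0)(p) \ne 0$. Hence $[G]_p$ is a unit in the Artinian local ring $T^1_{Y_p}$, and componentwise multiplication by $([G]_p)_p$ is an automorphism of $\prod_p T^1_{Y_p}$. Consequently, $\varphi$ is surjective if and only if the simpler map
\[
\psi \colon H^0(H, \calO(1)) \longrightarrow \prod_{p \in \Sing Y} T^1_{Y_p}, \qquad \ell \longmapsto ([\ell]_p)_p,
\]
is surjective---a condition on the cubic $Y$ together with the positions of $\Sing Y \subset H$.

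To complete the proof, I would stratify $\calC \times (\bP^5)^*$ by the singularity type $K$ of $Y = X \cap H$. Using the classification of \cite{viktorova2023classification}, for each admissible $K$ an explicit local normal form is available from which one computes a basis of $T^1_{Y_p}$ and checks that the classes $[x_i]_p$ span it; a codimension count then shows that the locus in which $\psi$ fails to surject has strictly smaller dimension than the $K$-stratum and is therefore avoided by a generic $X \in \calC$. As a byproduct, surjectivity forces $\tau(Y) = \dim \prod_p T^1_{Y_p} \le \dim H^0(H, \calO(1)) = 5$, recovering the companion bound in \cite[Lemma 3.8]{LSV}. The main obstacle will be this case analysis: the ``bad locus'' consists of degenerate geometric configurations (e.g.\ several singular points lying on a proper linear subspace of $H$, or local coordinate data aligning unfavourably with $G$), and for each singularity type of total Tjurina $\le 5$ one must verify that these loci have strictly positive codimension in $\calC$.
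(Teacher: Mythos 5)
The paper does not prove this proposition at all: it is imported verbatim from \cite[Lemma 3.8]{LSV}, so there is no internal argument to compare against. That said, the first two-thirds of your proposal are correct and worthwhile. The identification $H^0(Y,\calO_Y(1))\cong H^0(H,\calO_H(1))$, the formula $\varphi(\ell)=([\ell\cdot G]_p)_p$ with $G=\partial F/\partial x_0|_H$, and the observation that smoothness of $X$ forces $G(p)\neq 0$ at every $p\in\Sing Y$ (all other partials vanish there), so that multiplication by $([G]_p)_p$ is an automorphism of $\prod_p T^1_{Y_p}$, are all right. This is exactly the description of $\varphi$ that the authors use later, in the proof of Proposition \ref{prop: injectivity}, when they compute $l\mapsto l\cdot\partial_5F|_{x_5=0}\bmod J_g$ for explicit examples; your reduction to the purely intrinsic map $\psi:\ell\mapsto([\ell]_p)_p$ is a clean way to package it.

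The genuine gap is in the final genericity step, and it is not a routine verification. First, your case analysis is restricted to singularity types with $\tau\le 5$ on the grounds that $\tau(Y)\le 5$ is ``a byproduct'' of surjectivity --- but that bound is a consequence of the statement you are proving, so you cannot assume it when organizing the proof. For a combination $K$ with $\tau(K)\ge 6$ the map $\psi$ fails to surject on the \emph{entire} $K$-stratum for dimension reasons (the source is $5$-dimensional), so your claim that ``the locus where $\psi$ fails has strictly smaller dimension than the $K$-stratum'' is false there; for those $K$ one must instead show that the whole stratum projects to a proper subvariety of $\calC$. That is precisely the hard content of \cite[Lemma 3.8]{LSV} and, in effect, of the present paper (it is why $D_{E_6}$, $D_{D_6}$, etc.\ are proper divisors rather than all of $\calC$). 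Second, even on the $\tau\le 5$ strata, ``smaller dimension than the stratum'' does not imply ``avoided by a generic $X$'': the $K$-stratum in $U\times(\bP^5)^*$ typically dominates $U$ with positive-dimensional fibers (e.g.\ a general cubic has a $4$-parameter family of $1$-nodal sections), so a divisor inside it can still dominate $U$. What you actually need is that the bad locus has dimension strictly less than $\dim U$, equivalently codimension greater than $5$ in the incidence variety. Finally, the stratification ranges over infinitely many types ($A_n$, $D_n$, \dots) as well as non-isolated singularities, so a uniform codimension estimate (say, in terms of $\tau$ or jet determinacy) is required rather than a finite case check. Without these ingredients the argument does not close.
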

Notice that $\dim H^0(Y,\calO(1))=5$. It follows that for a hyperplane section $Y$ of a general cubic fourfold $X$, the total Tijurina number $\tau(Y)$ is less than or equal to 5. Conversely, it is shown in \cite{Kazarian}, that a general cubic fourfold contains hyperplane sections with any combination of singularities possible with total Tjurina number less than or equal to $5$. Thus, we will consider cubic fourfolds with a hyperplane section $Y$ with $\tau(Y)=6.$ By a dimension count, this locus is expected to be a divisor in $\calC$.

Let $Y\subset\bP^4$ be a cubic threefold with exactly $m$ singularities of types $K_1,\ldots,K_m$. We say that $Y$ has the combination of singularities $K=K_1+\ldots+K_m$. We make the following definition:

\begin{definition} 
    If $Y$ has the combination of singularities $K$, we say that $Y$ is a \textit{$K$-cubic threefold}. Further, if $Y$ is a hyperplane section of a smooth cubic fourfold, we say that $Y$ is a \textit{$K$-hyperplane section}.
\end{definition}

After a linear change of coordinates, we can assume that one of the singularities of $Y\subset \bP^4$ is at $p=[1:0:0:0:0]$. We choose $p$ to be the worst singularity of $Y$. Then $Y$ can be defined by an equation of the form
\[
    f:=x_0f_2(x_1,x_2,x_3,x_4)+f_3(x_1,x_2,x_3,x_4)=0,
\]
where $f_2$ and $f_3$ are homogeneous polynomials of degree $2$ and $3$, respectively. The geometry of $Y$ is closely related to that of the complete intersection curve $C=V(f_2,f_3)\subset\bP^3$.

\begin{definition}
    We define the \textit{corank} of $p$ to be equal to the corank of the quadratic form $f_2(x_1,x_2,x_3,x_4)$.
\end{definition}

In what follows, we will be concerned with certain $ADE$ singularities and the singularity $T_{333}$ (the latter also known as $\tilde{E}_6$ and $P_8$), which are defined in $\bC^4$ by the following local equations:
\[
\begin{aligned}
    A_n&: x_1^2+x_2^2+x_3^2+x_4^{n+1}=0,\; n\geq 1;\\
    D_n&: x_1^2+x_2^2+x_4(x_3^2+x_4^{n-2})=0,\; n\geq 4;\\
    E_6&: x_1^2+x_2^2+x_3^3+x_4^4=0;\\
    T_{333}&: x_1^2+x_2^3+x_3^3+x_4^3+ax_2x_3x_4=0,\; a^3+27\neq 0.
\end{aligned}
\]
The corank is equal to $0$ for $A_1$ singularities, to $1$ for $A_n$ with $n>1$, to $2$ for $D_n$ and $E_n$, and to $3$ for $T_{333}$. Note that $ADE$ and $T_{333}$ singularities are quasihomogeneous, thus $\tau(g)$ coincides with the Milnor number $\mu(g)$. We have $\tau(A_n)=n$, $\tau(D_n)=n$, $\tau(E_6)=6$ and $\tau(T_{333})=8$. The $ADE$ singularities are sometimes called 0-modal, whereas $T_{333}$ is a unimodal singularity (or modality 1). The modality of a singularity $K$ is equal to the dimension of the strata in the miniversal deformation space of constant Milnor number $\mu:=\mu(p)$ \cite{Gab1}. 

If a singular point $p\in Y$ is of corank $2$ or $3$, the quadric $f_2$ defines either two planes or a double plane in $\bP^3$, respectively, which makes the curve $C$ and thus $Y$ easier to understand in comparison to smaller corank cases. By applying the classification of singularities of cubic threefolds \cite{viktorova2023classification}, we obtain:

\begin{corollary}
    The possible combinations of singularities on a cubic threefold $Y$ with $\tau(Y)=6$ and containing a singularity of corank larger than $1$ are:
    $$E_6, D_6, D_5+A_1, D_4+A_2 \text{ and } D_4+2A_1.$$ 
\end{corollary}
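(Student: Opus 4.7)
The plan is to combine the constraint $\tau(Y)=6$ with the classification of singular cubic threefolds of \cite{viktorova2023classification} and standard facts about the corank stratification of isolated hypersurface singularities. The distinguished point $p\in\Sing(Y)$ has corank at least $2$ by hypothesis, and one must bound its type from above and then enumerate how the remaining Tjurina number can be distributed.

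First I would rule out the case of corank $3$. By the classification in \cite{viktorova2023classification}, every corank-$3$ isolated hypersurface singularity occurring on a cubic threefold has Tjurina number at least $\tau(T_{333})=8$; since $\tau(Y)=6$, no such singularity can appear, and in particular $p$ has corank exactly $2$. Moreover the same classification implies that every other singularity of $Y$ has corank $\leq 2$ and, given the global budget, must in fact be of $ADE$ type, since any non-simple germ occurring on a cubic threefold has Tjurina number $\geq 7$.

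Next I would enumerate the possibilities for $p$. The corank-$2$ simple germs are $D_n$ ($n\geq 4$, $\tau=n$) and $E_n$ ($n=6,7,8$, $\tau=n$), and the inequality $\tau(p)\leq \tau(Y)=6$ reduces this list to $p\in\{D_4,D_5,D_6,E_6\}$. It remains to distribute the remaining Tjurina budget $6-\tau(p)$ among the other singularities, each of which is $ADE$ with $\tau\leq 2$, hence either $A_1$ (of $\tau=1$) or $A_2$ (of $\tau=2$). This yields exactly: $p\in\{E_6,D_6\}$ with no further singularities, giving $E_6$ and $D_6$; $p=D_5$ with one extra $A_1$, giving $D_5+A_1$; and $p=D_4$ with remaining Tjurina $2$, giving either $D_4+A_2$ or $D_4+2A_1$.

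Finally, I would appeal once more to the classification of \cite{viktorova2023classification} to confirm that each of these five combinations is realized by an actual cubic threefold, so none of the numerically admissible cases is spurious. The main obstacle is really only the corank-$3$ exclusion and the exclusion of non-simple corank-$2$ germs with small Tjurina number: both rely essentially on the explicit list provided by \cite{viktorova2023classification}, after which the argument is a short combinatorial enumeration.
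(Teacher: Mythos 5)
Your proposal is correct and follows essentially the same route as the paper, which simply invokes the classification of cubic threefold singularities in \cite{viktorova2023classification}; your corank-$3$ and non-simple exclusions and the subsequent budget enumeration over $\{D_4,D_5,D_6,E_6\}$ with remainders filled by $A_1$ and $A_2$ are a faithful unpacking of what that citation is doing. The realizability of all five combinations is confirmed by the explicit normal forms in Proposition \ref{prop: equations}, exactly as you suggest.
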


Let $Y$ be a $K$-cubic threefold with $K\in \{E_6, D_6, D_5+A_1, D_4+A_2\}$ as above. By \cite[Propositions 3.11, 3.14]{viktorova2023classification}, the curve $C$, obtained by projection from the singularity with highest $\tau$ in the above combinations, is a union of two irreducible plane cubic curves.
This implies that the defect $\sigma(Y)$ is $0$ by \cite[Theorem 1.1]{MV25}.
In contrast, if $Y$ is a $(D_4+2A_1)$-cubic, the curve $C$ can have two or three components, thus we can have $\sigma(Y)=0$ or $\sigma(Y)=1$. 
 Recall that the \textit{defect} of a normal projective variety measures the failure of $\bQ$-factoriality - if $\sigma(Y)>0$ then $Y$ contains a divisor that is not Cartier. In particular, cubic threefolds with $\sigma(Y)>0$ contain a plane or a cubic scroll by \cite[Theorem 1.5]{MV25}. We obtain:

\begin{corollary}
    Let $X$ be a cubic fourfold not containing a plane or a cubic scroll, i.e., $X\notin\calC_8\cup \calC_{12}.$ Then every hyperplane section $Y:=X\cap H$ has $\sigma(Y)=0.$
\end{corollary}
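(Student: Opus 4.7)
The plan is to argue by contraposition: assuming some hyperplane section $Y=X\cap H$ satisfies $\sigma(Y)>0$, I will derive that $X$ must contain either a plane or a cubic scroll, which contradicts the standing hypothesis $X\notin\calC_8\cup\calC_{12}$. The main input is \cite[Theorem 1.5]{MV25}, which has just been quoted in the paragraph preceding the corollary; the entire proof is essentially a one-line application of that result, with a brief accounting of the degenerate hyperplane sections for which the defect is \emph{a priori} not well defined.

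First I would separate out the pathological cases. Since $Y=X\cap H$ is a cubic divisor in $H\cong\bP^4$, if $Y$ fails to be reduced or irreducible the only possibilities are $Y=P\cup Q$ with $P$ a $2$-plane and $Q$ a (possibly degenerate) quadric, or $Y=2P\cup P'$ with $P,P'$ planes. In either case $X\supset P$, so $X\in\calC_8$, contradicting the hypothesis. Hence $Y$ is reduced and irreducible, and in particular a normal projective variety (hypersurfaces in $\bP^4$ with isolated singularities are normal, and singularities in codimension $1$ would force non-reducedness or reducibility of the hypersurface).

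Now I invoke \cite[Theorem 1.5]{MV25}: a reduced irreducible cubic threefold with $\sigma(Y)>0$ must contain a $2$-plane or a cubic scroll. Either surface is contained in $X$ since $Y\subset X$, so $X\in\calC_8\cup\calC_{12}$, contradicting our hypothesis. Therefore $\sigma(Y)=0$ for every hyperplane section, as claimed.

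There is really no hard step here — the corollary is a direct contrapositive reformulation of \cite[Theorem 1.5]{MV25} — so the only thing to take care of is the book-keeping argument that rules out non-normal $Y$, which I carried out above. If one wished, one could alternatively phrase the whole proof as: the map sending $H\in(\bP^5)^*$ to a surface in $X\cap H$ whose existence is guaranteed by positive defect factors through the Fano variety of planes or the Hilbert scheme of cubic scrolls in $X$, both of which are empty under the hypothesis $X\notin\calC_8\cup\calC_{12}$.
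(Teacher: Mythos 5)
Your proof is correct and is essentially the paper's own argument: the corollary is presented as an immediate contrapositive of \cite[Theorem 1.5]{MV25}, exactly as you apply it. Your extra casework ruling out non-normal $Y$ is harmless but unnecessary (and the claim that codimension-one singularities force reducibility is not literally true for cubic threefolds), since the Gauss map of the smooth fourfold $X$ is finite, so every hyperplane section has isolated singularities and is automatically reduced, irreducible and normal.
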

Thus our divisors will parametrise cubic fourfolds with hyperplane sections $Y$ with $\sigma(Y)=0.$

It will be useful it what follows to have the general form of a $K$-cubic threefold $Y$. Using the projection method, one obtains:

\begin{proposition} \label{prop: equations}
    Let $Y\subset \bP^4$ be a $K$-cubic threefold with $K\in \{E_6, D_6, D_5+A_1, D_4+A_2, T_{333}\}.$ After a linear changes of coordinates, we have that $Y=V(f_K)$, where $f_K$ is given as:
    \begin{align*}
        f_{E_6}&:=x_0x_1x_2+x_1q_1(x_1,x_3,x_4)+x_2q_2(x_2,x_3,x_4)+x_1x_2l(x_1,\ldots,x_4)+x_3^3,\\
        f_{D_6}&:=x_0x_1x_2+x_1q_1(x_1,x_3,x_4)+x_2q_2(x_2,x_3)+x_2x_4h(x_2,x_3)+x_1x_2l(x_1,\ldots,x_4)+x_3^2x_4,\\
        f_{D_5+A_1}&:=x_0x_1x_2+x_1q_1(x_1,x_3,x_4)+x_2q_2(x_3,x_4)+x_1x_2l(x_1,\ldots,x_4)+x_3^2x_4,\\
        f_{D_4+A_2}&:=x_0x_1x_2+x_1q_1(x_1,x_3,x_4)+x_2h^2(x_3,x_4)+x_1x_2l(x_1,\ldots,x_4)+x_3^2x_4+x_4^3,\\
        f_{T_{333}}&:=x_0x_1^2+f_3(x_1,\ldots,x_4).
    \end{align*}

    A $(D_4+2A_1)$-cubic threefold $Y\subset\bP^4$ with $\sigma(Y)=0$ or $\sigma(Y)=1$ can similarly be defined by:
    \begin{align*}
    f_{D_4+2A_1}^0&:=x_0x_1x_2+x_1q_1(x_3,x_4)+x_2q_2(x_3,x_4)+x_1x_2l(x_1,\ldots,x_4)+x_3^2x_4+x_4^3,\;\text{or}\\        f_{D_4+2A_1}^1&:=x_0x_1x_2+x_1q_1(x_1,x_3,x_4)+x_2x_4h(x_3,x_4)+x_1x_2l(x_1,\ldots,x_4)+x_3^2x_4+x_4^3.
    \end{align*}
    Here the polynomials $h$ and $l$ are linear, $q_1$ and $q_2$ are homogeneous of degree $2$, $f_3$ is homogeneous of degree $3$.
\end{proposition}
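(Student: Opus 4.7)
The plan is to normalize the defining equation of $Y$ by a projective transformation placing the worst singularity at a distinguished point, and then to invoke the classification of corank-$2$ and corank-$3$ singularities of cubic threefolds from \cite{viktorova2023classification} to read off the remaining constraints on the cubic part.

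First I apply a projective automorphism of $\bP^4$ to move the worst singularity $p$ of $Y$ to $[1:0:0:0:0]$, so that
\[
    f = x_0\, f_2(x_1,\ldots,x_4) + f_3(x_1,\ldots,x_4),
\]
where the corank of $p$ equals the corank of $f_2$. A linear change in $x_1,\ldots,x_4$ then puts $f_2$ into a normal form depending only on its corank: $f_2 = x_1 x_2$ with kernel $\langle x_3,x_4\rangle$ in the corank-$2$ cases $K\in\{E_6, D_6, D_5+A_1, D_4+A_2, D_4+2A_1\}$, and $f_2 = x_1^2$ with kernel $\langle x_2,x_3,x_4\rangle$ in the $T_{333}$ case. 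The $T_{333}$ case is then immediate: no further constraint arises on $f_3$, giving $f_{T_{333}} = x_0 x_1^2 + f_3(x_1,\ldots,x_4)$.

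For the corank-$2$ cases, I use the unique decomposition
\[
    f_3 = x_1 x_2\, l(x_1,\ldots,x_4) + x_1\, q_1(x_1,x_3,x_4) + x_2\, q_2(x_2,x_3,x_4) + C(x_3,x_4),
\]
with $l$ linear, $q_1, q_2$ homogeneous quadratic, and $C(x_3,x_4)$ equal to the restriction of $f_3$ to the kernel $\langle x_3,x_4\rangle$ of $f_2$. Via the projection from $p$, the curve $C_Y := V(f_2, f_3)\subset\bP^3$ is the union of two plane cubics meeting along the line $L = V(x_1, x_2)$, by \cite[Propositions 3.11, 3.14]{viktorova2023classification}. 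The zero-cycle $V(C)\subset L$ detects the singularity of $Y$ at $p$, while the remaining singular points of $C_Y$ correspond to the additional singular points of $Y$. Matching $V(C)$ with the local normal forms of $E_6$ and $D_n$ determines $C$ up to a linear change in $(x_3, x_4)$: a triple root, normalized to $C = x_3^3$, for $E_6$; a simple plus a double root, normalized to $C = x_3^2 x_4$, for $D_6$ and $D_5+A_1$; and $C = x_3^2 x_4 + x_4^3$ for $D_4+A_2$ and $D_4+2A_1$, this being the precise shape needed for a $D_4$ at $p$. The further differences among the stated normal forms, such as $q_2$ being restricted to $(x_2, x_3)$ for $D_6$ versus $(x_3, x_4)$ for $D_5+A_1$, come from imposing that the residual $A_n$ singularities of $K$ occur at prescribed points of $C_Y$ and from exhausting the linear automorphisms of $\bP^4$ fixing $p$, $f_2$, and $C$.

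The main technical hurdle is verifying, case by case, that each prescribed shape for $(q_1, q_2, l)$ in \Cref{prop: equations} actually realizes the claimed singularity type at $p$ (and the prescribed additional singularities), and conversely that every $K$-cubic can be brought into one of these forms by an admissible coordinate change. I would handle this by a direct local analysis in the affine chart $\{x_0 = 1\}$, reducing $f_2 + f_3$ to the standard $ADE$ or $T_{333}$ normal form by a formal coordinate change and checking that the residual parameters match the proposed shape of $(q_1, q_2, l)$. Finally, to separate $f^0_{D_4+2A_1}$ from $f^1_{D_4+2A_1}$, I invoke \cite[Theorems 1.1 and 1.5]{MV25} to read $\sigma(Y)$ off the number of irreducible components of $C_Y$: for $f^0_{D_4+2A_1}$ both plane cubics of $C_Y$ remain irreducible generically, giving two components and $\sigma(Y) = 0$, while for $f^1_{D_4+2A_1}$ the factor $x_4$ splits off one of them, giving three components and $\sigma(Y) = 1$.
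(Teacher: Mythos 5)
Your proposal is correct and follows essentially the same route as the paper: the paper's entire proof is a citation of \cite[Propositions 2.2, 3.11, 3.14]{viktorova2023classification}, i.e.\ it delegates exactly the projection-from-the-worst-singularity argument (normalizing $f_2$ by corank, decomposing $f_3$, and reading the singularity type and the defect off the curve $C=V(f_2,f_3)$ and its restriction to the kernel of $f_2$) that you sketch. The case-by-case local verification you defer at the end is precisely the content of the cited propositions, so nothing is missing relative to the paper's own treatment.
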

\begin{proof}
    The statement follows from \cite[Propositions 2.2, 3.11, 3.14]{viktorova2023classification}.
\end{proof}
\begin{remark}\label{non-general}
    The equations above define $K$-cubic threefolds for general choice of parameters; non-general ones may define cubic threefolds with additional singularities or a worse singularity at $p$. For instance, any cubic threefold singularity of corank $3$ can be defined by an equation of the form $f_{T_{333}}$.
\end{remark}

\section{Construction of the divisors}\label{sec: construction}
Let us fix $\bP^5$ with coordinates $x_0,\dots x_5.$ Recall that the (coarse) moduli space of smooth cubic fourfolds is constructed as $\calC:=U//\SL(6),$ where $U\subset \bP(H^0(\bP^5,\calO(3)))$ is the Zariski open subset of smooth cubic forms. Thus $\calC$ is a 20 dimensional quasi-projective variety. For $K$ as in Proposition \ref{prop: equations}, we denote by $W_K\subset U$ the locus of cubic forms
\begin{equation}\label{eqn: fourfold}
        F_K:=x_5q(x_0,\ldots,x_5)+f_K.
    \end{equation}

Every cubic fourfold with a $K$-hyperplane section is thus isomorphic to one defined by a cubic form in $W_K$.
Let $D_{K}\subset\calC$ denote the image of the closure of the locus $W_K\subset U$.
In this section, we will prove the following result:

\begin{theorem}\label{theorem: divisors}
    The loci $D_{K}\subset\calC$ are irreducible for $K\in\{E_6,D_6,D_5+A_1,D_4+A_2,T_{333}\}$. The locus $D_{D_4+2A_1}$ has two components, denoted $D_{D_4+2A_1}^0$ and $D_{D_4+2A_1}^1$, distinguished by whether the defect $\sigma$ is $0$ or $1$.
    
    The loci $D_{E_6}, D_{D_6}, D_{D_5+A_1}, D_{D_4+A_2}, D_{D_4+2A_1}^0, D_{D_4+2A_1}^1$ are irreducible divisors in $\calC$, and $D_{T_{333}}$ has codimension $2$ in $\calC$.
\end{theorem}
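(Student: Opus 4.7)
The plan combines an explicit parametrization of each divisor (via the normal forms of Proposition \ref{prop: equations}) with a deformation-theoretic codimension count.

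For each $K \in \{E_6, D_6, D_5+A_1, D_4+A_2, T_{333}\}$, the locus $W_K \subset U$ of cubic forms
\[
    F_K = x_5\, q(x_0,\ldots,x_5) + f_K(x_0,\ldots,x_4),
\]
with $q$ an arbitrary quadratic form and $f_K$ as in Proposition \ref{prop: equations}, is a \emph{linear} subspace of $U$ and hence irreducible; its image $D_K \subset \calC$ under the GIT quotient is correspondingly irreducible. For $K = D_4+2A_1$, Proposition \ref{prop: equations} furnishes two inequivalent normal forms distinguished by $\sigma(Y)\in\{0,1\}$, yielding two irreducible components $D^0_{D_4+2A_1}$ and $D^1_{D_4+2A_1}$; these are genuinely distinct because $\sigma$ is a discrete deformation invariant by \cite{MV25}.

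To compute codimensions, I introduce the incidence variety
\[
    \calI_K := \{(X, H) \in \calC \times (\bP^5)^* : X \cap H \text{ is a } K\text{-cubic threefold}\}
\]
of expected dimension $25 - c_K$, where $c_K$ is the codimension of the $K$-stratum in the miniversal deformation $\prod_{p \in \Sing Y} T^1_{Y_p}$. Since the $ADE$ singularities are $0$-modal, $c_K = \tau(K) = 6$ in the first five cases; since $T_{333}$ is unimodal with $\tau = 8$, $c_{T_{333}} = 8 - 1 = 7$. To turn the expected count into the actual one, it suffices that for a general member of $\calI_K$ the natural map
\[
    \Psi \colon T_{(X,H)}(\calC \times (\bP^5)^*) \longrightarrow \prod_{p \in \Sing Y} T^1_{Y_p}
\]
sending an infinitesimal deformation of the pair to the induced deformation of the singularities of $Y$ be surjective. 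This is precisely the content of the companion Proposition \ref{prop: injectivity} (for the $\tau = 6$ cases) and Lemma \ref{lemma: T333} (for $T_{333}$). Combined with the fact that a general $X \in D_K$ admits only finitely many $K$-hyperplane sections (readable from the normal forms, since the locus of such $H$ for fixed $X$ has expected dimension $5 - c_K < 0$), one concludes $\codim_\calC D_K = c_K - 5$: codimension $1$ for the five $\tau = 6$ cases, and codimension $2$ for $T_{333}$. Finally, $D^1_{D_4+2A_1} = \calC_8$ because $\sigma(Y)=1$ forces $X \in \calC_8 \cup \calC_{12}$ by the corollary after Proposition \ref{prop: equations}, while the description of hyperplane sections on $\calC_{12}$ in \cite{flops} yields only $6A_1$-configurations in general linear position, ruling out a $D_4 + 2A_1$ section there; equality then follows from irreducibility and matching dimensions.

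The technical heart of the argument is the surjectivity of $\Psi$ in Proposition \ref{prop: injectivity} and Lemma \ref{lemma: T333}. For each normal form $f_K$ one must exhibit explicit first-order deformations of $(X, H)$ spanning $\prod T^1_{Y_p}$ modulo the equisingular stratum. Quasi-homogeneity of the $ADE$ and $T_{333}$ singularities reduces this to monomial computations in the Tjurina algebra, but the $T_{333}$ case is especially delicate: its unique modulus $a$ generates a distinguished equisingular direction that must be carefully separated from the genuinely deforming ones, which is why the dedicated Lemma \ref{lemma: T333} is needed alongside the general Proposition \ref{prop: injectivity}.
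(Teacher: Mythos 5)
Your overall architecture (normal forms give irreducibility; a deformation-theoretic count gives the codimension) is in the right spirit, but the pivotal step is misattributed and therefore not actually established. You reduce the codimension count to the surjectivity of the map $\Psi\colon T_{(X,H)}(\calC\times(\bP^5)^*)\to\prod_{p}T^1_{Y_p}$ and assert that this ``is precisely the content of'' \Cref{prop: injectivity} and \Cref{lemma: T333}. It is not: those results prove \emph{injectivity} of the much smaller map $\varphi\colon H^0(Y,\calO(1))\to\bigoplus_p T^1_{Y_p}$, whose source is $5$-dimensional while the target has dimension $6$ (resp.\ $8$); such a map can never be surjective, and surjectivity of the larger $\Psi$ (which would follow, e.g., from surjectivity of $H^0(\bP^4,\calO(3))\to\prod_p T^1_{Y_p}$ plus an argument that these cubic deformations of $Y$ are realized by deformations of the pair $(X,H)$) is nowhere proved in your write-up. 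Symmetrically, the statement you treat as ``readable from the normal forms'' --- that a general $X\in D_K$ has only finitely many $K$-hyperplane sections --- is exactly what \Cref{prop: injectivity}, \Cref{E6 discrete fibers} and \Cref{lemma: T333} are for: a negative expected dimension $5-c_K<0$ does not by itself preclude a positive-dimensional family of $K$-sections, and for $T_{333}$ one must additionally check that the image of $\varphi$ misses the $1$-dimensional modular direction, which is the whole point of \Cref{lemma: T333}. So the two key inputs of your argument are swapped: the cited results give you finiteness of the fibers of $\calI_K\to\calC$, not the transversality that controls $\dim\calI_K$.

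For contrast, the paper avoids the transversality question entirely: since $W_K$ is an affine space of known dimension $21+p_K$, it computes $\dim D_K=(21+p_K)-d_K$ where $d_K=\dim\bigl(W_K\cap(w_K\cdot\GL(6))\bigr)$ is the generic fiber dimension of $W_K\to D_K$, and it determines $d_K$ by an explicit case-by-case description of the matrices $g\in\GL(6)$ with $g\cdot w_K\in W_K$; the finiteness results are used there only to reduce from $\GL(6)$ to the subgroup preserving $\{x_5=0\}$. Your incidence-variety route could be made to work, but you would need to (i) actually prove surjectivity of $\Psi$ at a general point of each $\calI_K$ (a separate computation from the injectivity of $\varphi$), and (ii) be careful that $\calI_K$ should be defined over the parameter space $U\times(\bP^5)^*$ rather than over the GIT quotient $\calC$, where ``$X\cap H$'' is not well defined. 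As written, the codimension claim rests on an unproved assertion.
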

In order to prove Theorem \ref{theorem: divisors}, we need to analyse the deformation theory of a $K$-hyperplane section inside the ambient cubic fourfold $X$. In Section \ref{subsec:def theory}, we will prove that a cubic fourfold $X$ admitting a $K$-hyperplane section for $K$ as above only admits finitely many of them. In Section \ref{subsec: proof thm div}, we use this result to compute the dimension of $W_K$, along with the dimension of the $SL(6)$ orbit of a general element of $W_K$, allowing us to prove Theorem \ref{theorem: divisors}.  

\subsection{Deformation theory of \texorpdfstring{$Y$}{Y}}\label{subsec:def theory}

Let $K\in\{E_6, D_6,D_5+A_1, D_4+A_2, D_4+2A_1,  T_{333}\}$, and let $X\subset \bP^5$ be a general cubic fourfold containing a $K$-hyperplane section $Y$. We can assume that $X:=V(F_K)$ for $F_K$ as in Equation \ref{eqn: fourfold}, and that $Y:=V(f_K, x_5)$. 
Let $W_K\subset \bP(H^0(\bP^5, \calO(3))$ be the locus parameterising cubic fourfolds with a $K$-hyperplane section as above. Note that for $K=D_4+2A_1,$ we see that $W_{D_4+2A_1}=W_{D_4+2A_1}^0\cup W_{D_4+2A_1}^1$, corresponding to the two equations $f_{D_4+2A_1}^0$, $f_{D_4+2A_1}^1.$ 

We start by analysing the deformations of $Y$ in $X$. 

\begin{proposition}\label{prop: injectivity}
    Let $X$ be a cubic fourfold given by a general element in $W_{K}$ for $K\in \{E_6, D_6,D_5+A_1, D_4+A_2, D_4+2A_1\}$ and let $Y$ be the $K$-hyperplane section $X\cap \{x_5=0\}$. 
    Then the map
    \[
        \varphi: H^0(Y, \calO(1))\rightarrow \bigoplus_{p\in \Sing(Y)} T^1_{Y_p}
    \]
    is injective.
\end{proposition}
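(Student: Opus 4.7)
My plan is a direct case-by-case computation using the normal-form equations $f_K$ from \Cref{prop: equations}. After the changes of coordinates there, the worst singularity of $Y$ sits at $p_1 = [1\!:\!0\!:\!0\!:\!0\!:\!0]$, and in the multi-singularity cases the additional singular points can be placed on convenient coordinate flats. In the affine chart $x_0 = 1$ the local equation $g_1$ has quadratic part $y_1 y_2$, and an iterative substitution using the generators $\partial_{y_i} g_1$ of $J_{g_1}$ shows that in $T^1_{Y_{p_1}}$ the coordinate images satisfy
\[
[y_1] \equiv -\bar q_2(y_3, y_4) + O(\mathfrak{m}^3), \qquad [y_2] \equiv -\bar q_1(y_3, y_4) + O(\mathfrak{m}^3),
\]
where $\bar q_i := q_i(0, y_3, y_4)$. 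Consequently a linear form $\ell = \sum_{i=0}^4 a_i x_i$ has image $a_0 + a_3 y_3 + a_4 y_4 - a_1 \bar q_2 - a_2 \bar q_1 + (\text{higher order})$ in $T^1_{Y_{p_1}}$, in which $a_0, a_3, a_4$ are pinned down by the degree-$0$ and degree-$1$ parts of $T^1_{Y_{p_1}}$ while $a_1, a_2$ contribute via the projections of $\bar q_1, \bar q_2$ into the degree-$2$ part.

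For the multi-singularity cases ($D_5 + A_1$, $D_4 + A_2$, $D_4 + 2A_1$), the auxiliary singularities give further evaluation-type conditions: each extra $A_1$ contributes $\ell(p_i) = 0$ (codimension $1$), and the $A_2$ point contributes a codimension-$2$ constraint from its $2$-dimensional Tjurina algebra. Combining these with the single constraint on $(a_1, a_2)$ coming from $\varphi_{p_1}$ generically forces $\ell = 0$: concretely, the argument reduces to the non-vanishing of a determinant built from the coefficients of $\bar q_1, \bar q_2$ and the coordinates of the auxiliary singular points, a non-empty Zariski-open condition on $W_K$.

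For the single-singularity cases ($E_6$ and $D_6$), there is no auxiliary point to help, and injectivity of $\varphi = \varphi_{p_1}$ must come entirely from the richer structure of the $6$-dimensional $T^1_{Y_{p_1}}$. Here the degree-$\le 1$ part accounts for a $3$-dimensional quotient controlled by $a_0, a_3, a_4$, and the degree-$2$ part is $2$-dimensional in both cases; generically the projections of $\bar q_1$ and $\bar q_2$ into this degree-$2$ part are linearly independent, which yields full rank $5$ for $\varphi_{p_1}$.

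The main obstacle is the bookkeeping in the single-singularity cases: one must push the Jacobian-ideal reduction one order beyond the naive computation, since for $D_6$ one of the available degree-$2$ basis elements is \emph{consumed} by a normal-form relation, so the independence of $[\bar q_1]$ and $[\bar q_2]$ ultimately requires a cubic-order correction involving the coefficients of the linear form $l$. Once these generic non-degeneracy conditions are written down explicitly, they become polynomial non-vanishing constraints on the coefficients defining $f_K$, and their conjunction defines a non-empty Zariski open subset of $W_K$, which suffices to establish injectivity for the general member.
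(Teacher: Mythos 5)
Your route is genuinely different from the paper's (which simply observes that injectivity is an open condition and then verifies $\rank(\varphi)=5$ on one explicit \verb|MAGMA|-checked example per case), and your structural analysis of the normal forms is largely sound: the reductions $[y_1]\equiv -\bar q_2+O(\mathfrak{m}^3)$, $[y_2]\equiv -\bar q_1+O(\mathfrak{m}^3)$ are correct, and the filtration argument pinning down $a_0,a_3,a_4$ via the degree $\le 1$ graded pieces works. However, you are computing the wrong map. The map $\varphi$ is the Kodaira--Spencer map of the family of hyperplane sections: writing $F=x_5q+f_K$, the deformation $x_5=\varepsilon\ell$ sends $\ell$ to the class of $\ell\cdot\partial_5F|_{x_5=0}=\ell\cdot q|_{x_5=0}$ in each $T^1_{Y_{p_i}}$, not to the class of $\ell$ itself. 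This is repairable --- smoothness of $X$ at each $p_i$ forces $q(p_i)\neq 0$, so multiplication by $q|_{x_5=0}$ is an automorphism of each local Tjurina algebra and injectivity of the two maps is equivalent --- but that reduction must be stated, and as written your formula for the image of $\ell$ does not describe $\varphi$.

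The more serious gap is that you never establish that your generic non-degeneracy conditions are actually satisfied by \emph{some} member of $W_K$. Saying a determinant "is a non-empty Zariski-open condition on $W_K$" presupposes that the relevant polynomial in the coefficients is not identically zero on $W_K$, which is essentially the content of the proposition. For $E_6$ and the multi-singularity cases this is at least plausible at leading order (two independent binary quadrics mapping to a $2$-dimensional degree-$2$ piece, or evaluation conditions at auxiliary points whose coordinates are, however, \emph{not} independent of $q_1,q_2$ in the normal form). But for $D_6$ your own analysis shows the leading-order count fails: the degree-$2$ graded piece of $T^1$ is only $1$-dimensional (since $y_3y_4$ and $y_3^2$ are absorbed by the Jacobian relations of $y_3^2y_4+\cdots$), the image of $[y_1]$ lands in $\mathfrak{m}^3T^1$, and injectivity hinges on a higher-order correction that you defer without computing. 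Until that correction is shown to be generically nonzero --- by carrying out the reduction explicitly or, as the paper does, by checking a single explicit example --- the $D_6$ case is not proved, and the other cases remain at the level of a plausible sketch rather than a proof.
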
 

\begin{proof}
    Notice that in order to prove the claim for each $K\in \{E_6, D_6,D_5+A_1, D_4+A_2\}$ it is enough to check the statement for one example of $X\in W_K$, since injectivity is an open condition. In the case of $D_4+2A_1$ one needs to prove the claim for one example in $W_{D_4+2A_1}^0,$ and one in $ W_{D_4+2A_1}^1$.
    We construct examples of smooth cubic fourfolds $X$ with a $K$-hyperplane section using the forms given in Proposition \ref{prop: equations} via \verb|MAGMA| - we list all the examples  in Section \ref{sec: non Hassett}. 
    
    We first prove the claim for $K=E_6$.    
    Consider the cubic fourfold $X:=\{F=0\}\subset \bP^5$, where
    \begin{align*}
        F =x_5&(89x_0^2-93x_5^2-51x_5^2+37x_0x_4-86x_4x_5+87x_4^2-60x_0x_3\\
       & -7x_3x_5-100x_3x_4-100x_0x_2-18x_2x_5+31x_2x_4+52x_2x_3\\
       &+71x_2^2-32x_0x_1+86x_1x_5+x_1x_4-30x_1x_3-41x_1x_2+13x_1^2)+f,
    \end{align*}
    where 
    \begin{align*}
     f = x_0&x_1x_2 - 24x_1^3 - 73x_1^2x_2 + 44x_1^2x_3 + 9x_1^2x_4 + 95x_1x_2^2 + 
    80x_1x_2x_3 - 62x_1x_2x_4 - 47x_1x_3^2 \\
    &+ 12x_1x_3x_4 - 7x_1x_4^2 + 15x_2^3 +
    33x_2^2x_3 - 88x_2^2x_4 + 29x_2x_3^2 - 33x_2x_3x_4 - 49x_2x_4^2 + x_3^3.
    \end{align*}
    Consider the hyperplane section $Y:=X\cap \{x_5=0\}$.
    By construction, $X$ is in $W_{E_6}$ (compare $F$ to Equation \ref{eqn: fourfold} and the equations in Proposition \ref{prop: equations}) and it is verifiable via \verb|MAGMA| that $X$ is smooth and $Y$ has a unique singularity at $p=[1:0:0:0:0]$ of type $E_6$.
       
    We first consider the base vector $T^1_{Y_p}$ of a miniversal deformation space for the $E_6$ singularity $(Y,p).$
    In the  chart $x_0\neq 0$, $Y$ is given by $\{x_5=g=0\}$, where
    \begin{align*}
        g=&x_1x_2-24x_1^3 - 73x_1^2x_2 + 44x_1^2x_3 + 9x_1^2x_4 + 95x_1x_2^2 + 80x_1x_2x_3 - 
    62x_1x_2x_4 - 47x_1x_3^2 \\
    &+ 12x_1x_3x_4 - 7x_1x_4^2 + 15x_2^3 + 
    33x_2^2x_3 - 88x_2^2x_4 + 29x_2x_3^2 - 33x_2x_3x_4 - 49x_2x_4^2 + x_3^3.
    \end{align*}
    One computes the ideal $J_g=\langle g,\partial g/\partial x_1,\dots \partial g/\partial x_4\rangle$ and finds that $T^1_{Y_p}= \bC[[x_1,\ldots,x_4]]/J_g$ is isomorphic as a $\bC$-vector space to $\bC^6$ with basis
    \[
        \{1,x_4,x_4^2, x_4^3, x_3, x_3x_4\}.
    \]

    Now we will consider a first order deformation of the hyperplane section $Y=X\cap\{x_5=0\}$ given by $x_5=\varepsilon l(x_0,\ldots,x_4)$, where $l(x_0,\dots x_4)$ is a linear form. The corresponding deformation of the hyperplane section is found by substituting into $f$ and expanding. We find that the deformation is given by the equation $f_{\varepsilon}=0$, where
    \[
        f_{\varepsilon}=f+
        \varepsilon\partial_5F(x_0,\ldots,x_4,0)l(x_0,\ldots,x_4).
    \]

   We analyse the global-to-local map $\varphi:H^0(Y,\calO(1))\rightarrow T^1_{Y_p}$.     
    Setting $x_0=1$ and rescaling the variables, we see that the map $\varphi$ is given by the vector space map
    \[
       l(1,x_1,\dots, x_4) \rightarrow \partial_5F(1,x_1\dots,x_4,0)l(1,x_1,\dots x_4) \mod J_g.
    \]
   Computing the corresponding matrix for the above linear map, one finds that $\rank(\varphi)=5, $ and thus $\varphi$ is injective as claimed. 
    
    For $K=D_6$, one repeats the same analysis as above for the cubic fourfolds given by Equation \ref{ex: E6} and \ref{ex: T_333} respectively. The hyperplane section $Y=X\cap\{x_5=0\}$ is a cubic threefold with an isolated singularity of the correct type at $p=[1:0:0:0:0:0]$. The remaining steps are identical to the proof for $K=E_6$, and can again be verified via \verb|MAGMA|.

    When $K$ is a combination of singularities, the linear map $\varphi$ becomes:
    $$l\mapsto (l\cdot \partial_5F|_{x_5=0, p_1},\dots, l\cdot \partial_5F|_{x_5=0, p_r}) \in \bigoplus_{p\in \Sing(Y)} T^1_{Y_p}.$$ For each $p\in \Sing(Y)$, one computes the map $H^1(Y,\calO(1))\rightarrow T_{Y_p}^1$ as before, and concatenating these maps gives the corresponding matrix for $\varphi.$ We compute for explicit cubic fourfolds given in Equations \ref{ex: D5+A1}, \ref{ex: D4+A2}, \ref{ex: D4+2A1 no defect} and \ref{ex: D4+2A1 with defect}. Again, one finds that $\rank(\varphi)=5$ in each of the above cases.
\end{proof}

We immediately obtain the following corollary:

\begin{corollary} \label{E6 discrete fibers}
    A general smooth cubic fourfold containing a $K$-hyperplane section for $K\in \{E_6, D_6,D_5+A_1, D_4+A_2,D_4+2A_1\}$ has finitely many $K$-hyperplane sections.
\end{corollary}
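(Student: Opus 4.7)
The plan is to interpret the corollary as a generic finiteness statement for the natural projection from an incidence variety, and to deduce it from Proposition \ref{prop: injectivity} via a tangent-space calculation. Set
$$\calZ_K := \{(X,H) \in U \times (\bP^5)^* : X\cap H \text{ is a } K\text{-hyperplane section}\}$$
with first projection $\pi_1 : \calZ_K\to U$. The fiber $\pi_1^{-1}(X)$ is precisely the set of $K$-hyperplane sections of $X$, and the assignment $X \mapsto (X,\{x_5 = 0\})$ provides a natural section $W_K \to \calZ_K$. Proving the corollary amounts to showing that $\pi_1$ is generically finite onto its image on the irreducible component of $\calZ_K$ containing this section.

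The key step is the Zariski tangent space calculation at a point $(X_0,H_0)$ with $H_0 = \{x_5 = 0\}$. A tangent vector is a pair $(\dot F, \dot l)$, with $\dot F$ a cubic form and $\dot l$ a linear form modulo $x_5$, such that the induced first-order deformation of $Y_0 = X_0 \cap H_0$ preserves the $K$-singularity type. Since every singularity in $K$ is ADE, hence $0$-modal, the $\mu$-constant stratum in $\bigoplus_{p} T^1_{(Y_0)_p}$ collapses to the origin, and equisingularity becomes the vanishing of the induced class in $\bigoplus_{p} T^1_{(Y_0)_p}$. Restricting to deformations fixing $X_0$ (i.e., $\dot F = 0$), this class is exactly $\varphi(\dot l)$ for $\varphi$ as in Proposition \ref{prop: injectivity}. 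Injectivity of $\varphi$ for $X_0$ general therefore implies that the kernel of the projection $T_{(X_0,H_0)}\calZ_K \to T_{X_0}U$ vanishes, so $\pi_1$ is unramified at $(X_0,H_0)$.

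Unramifiedness at the section point yields that $\pi_1$ is generically finite onto its image on the component of $\calZ_K$ containing the section, and for $X$ in a dense open subset of $W_K$ the corresponding fiber $\pi_1^{-1}(X)$ is finite: it is a $0$-dimensional locally closed subvariety of the projective variety $(\bP^5)^*$. The main technical obstacle is ruling out additional components of $\calZ_K$ dominating $W_K$ whose fibers might be positive-dimensional; this is handled by observing that at any such $H$, an $\SL(6)$ change of coordinates sending $H$ to $\{x_5 = 0\}$ places the transformed cubic back into $W_K$. Since the open locus in $W_K$ where Proposition \ref{prop: injectivity} applies is $\SL(6)$-invariant, every $H$ in the fiber over a sufficiently general $X$ is an isolated point, giving finiteness of the full fiber.
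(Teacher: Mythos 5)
Your argument is correct and is essentially the paper's own proof: the paper likewise identifies $H^0(Y,\calO(1))$ with the tangent space to deformations of $Y$ inside $X$, notes that the singularities in $K$ are $ADE$ and hence $0$-modal so that the $\mu$-constant stratum in $\bigoplus_{p}T^1_{Y_p}$ is just the origin, and concludes that a positive-dimensional family of $K$-hyperplane sections would force $\varphi$ from Proposition \ref{prop: injectivity} to have a kernel. Your incidence-variety packaging and the $\SL(6)$-sweep remark (which should really be phrased as invariance of the injectivity locus inside the incidence variety, not inside $W_K$) only make explicit what the paper's three-line proof leaves implicit.
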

\begin{proof}
    The vector space $H^0(Y, \calO(1))$ is the tangent space to the space of deformations of the $K$-hyperplane section $Y$ in $X$. An $ADE$ singularity $p$ has modality zero, meaning the stratum of $T^1_p$ with constant Milnor number $\mu(p)$ is of dimension $0$, i.e., just the origin. It follows that if $X$ contains a $1$-parameter family of $K$-hyperplane sections, the map $\varphi$ as above cannot be injective.
\end{proof}

One cannot directly apply the argument for a singularity of type $T_{333}$. Indeed, $T_{333}$ is a unimodal singularity, with modality 1. Thus we need to analyse the image of the map $\varphi$ of Proposition \ref{prop: injectivity} in order to reach the desired conclusion.

\begin{lemma}\label{lemma: T333}
    A general smooth cubic fourfold containing a $T_{333}$-hyperplane section has finitely many $T_{333}$-hyperplane sections.
\end{lemma}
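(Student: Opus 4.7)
The argument parallels Proposition~\ref{prop: injectivity} and Corollary~\ref{E6 discrete fibers}, but requires a new ingredient because $T_{333}$ has modality $1$. In the miniversal base $T^1_{Y_p}$ (dimension $\tau(T_{333}) = 8$), the $\mu$-constant stratum $\Sigma$ is a smooth $1$-dimensional curve through the origin; its tangent line $T_0\Sigma \subset T^1_{Y_p}$ is the \emph{modulus direction}. Injectivity of the global-to-local map $\varphi:H^0(Y,\calO(1))\to T^1_{Y_p}$ alone is no longer sufficient. Instead, one must show that the composition
\[
    \bar\varphi:\; H^0(Y,\calO(1)) \xrightarrow{\varphi} T^1_{Y_p} \twoheadrightarrow T^1_{Y_p}/T_0\Sigma
\]
is injective; equivalently, $\im(\varphi) \cap T_0\Sigma = 0$. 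Since $\dim H^0(Y,\calO(1)) = 5$ and $\dim(T^1_{Y_p}/T_0\Sigma) = 7$, injectivity of $\bar\varphi$ is numerically possible.

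The plan is to fix a specific $X \in W_{T_{333}}$ --- we use the example given later as Equation~\ref{ex: T_333} --- with hyperplane section $Y = X \cap \{x_5 = 0\}$ having its $T_{333}$ singularity at $p = [1:0:0:0:0:0]$. Crucially, we engineer the example so that $g := f_{T_{333}}(1, x_1, \ldots, x_4)$ is in the quasihomogeneous normal form $x_1^2 + x_2^3 + x_3^3 + x_4^3$ (i.e.\ $a = 0$), so that the modulus direction is immediately visible. First, compute $T^1_{Y_p} = \bC[[x_1, \ldots, x_4]]/\langle g, J_g \rangle$: a monomial basis is $\{1, x_2, x_3, x_4, x_2 x_3, x_2 x_4, x_3 x_4, x_2 x_3 x_4\}$, and $T_0\Sigma = \bC \cdot [x_2 x_3 x_4]$ by Arnold's normal form for the $T_{333}$ family. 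Second, compute $\varphi$ exactly as in the proof of Proposition~\ref{prop: injectivity}, sending $l(1, x_1, \ldots, x_4) \mapsto l \cdot \partial_5 F(1, x_1, \ldots, x_4, 0) \bmod J_g$, and obtain its $8 \times 5$ matrix in the chosen basis. Third, discard the row corresponding to $x_2 x_3 x_4$ and verify via \verb|MAGMA| that the resulting $7 \times 5$ matrix has rank $5$, so that $\bar\varphi$ is injective.

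To conclude, let $Z \subset (\bP^5)^*$ denote the locus of hyperplanes $H$ for which $X \cap H$ has a $T_{333}$ singularity. At any $[H] \in Z$, the Zariski tangent space of $Z$ is contained in $\varphi^{-1}(T_0\Sigma) = \Ker(\bar\varphi)$ computed for the corresponding section. Injectivity of $\bar\varphi$ at our chosen $(X, [H_0])$, together with semicontinuity of $\rank$ applied to the incidence variety $\mathcal{I} = \{(X, [H]) : X \cap H \text{ has a } T_{333} \text{ singularity}\} \subset W_{T_{333}} \times (\bP^5)^*$ under the first projection, implies that for a general $X \in W_{T_{333}}$ this injectivity holds at every $[H] \in Z$. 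Hence $Z$ is $0$-dimensional everywhere, and being algebraic it is finite.

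The main obstacle is pinning down $T_0\Sigma \subset T^1_{Y_p}$ cleanly: for a generic $g$ realizing a $T_{333}$-germ, one would need an analytic change of coordinates to Arnold's normal form before the modulus direction becomes visible, which is cumbersome. Choosing the example so that $g$ is already quasihomogeneous of $T_{333}$ type (i.e.\ $a = 0$, which is permitted since $0^3 + 27 \neq 0$) sidesteps this difficulty, making $T_0\Sigma = \bC \cdot [x_2 x_3 x_4]$ manifest and reducing the remaining verification to the same type of linear-algebra \verb|MAGMA| computation used in Proposition~\ref{prop: injectivity}.
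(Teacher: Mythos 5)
Your overall strategy is the same as the paper's: verify that the global-to-local map $\varphi$ is injective and that its image meets the $1$-dimensional modulus direction of the $T_{333}$ singularity only in $0$, then conclude that the locus of $T_{333}$-hyperplane sections is finite. The problem is the example you insist on. If you engineer $g = x_1^2+x_2^3+x_3^3+x_4^3$ exactly, then $J_g=\langle x_1,x_2^2,x_3^2,x_4^2\rangle$, so $x_1\in J_g$ and hence $\varphi(x_1)=[x_1\cdot Q]=0$ in $T^1_{Y_p}$ for \emph{any} quadric $Q=q(1,x_1,\dots,x_4,0)$. Thus the column of your $8\times 5$ matrix corresponding to $l=x_1$ vanishes identically, $\rank(\varphi)\leq 4$, and $\bar\varphi$ can never be injective on the $5$-dimensional source; the \texttt{MAGMA} check you describe is guaranteed to fail. (Geometrically, tilting the hyperplane as $x_5=\varepsilon x_1$ only translates the singular point to first order, so this direction is invisible in $T^1_{Y_p}$.) With $\Ker(\bar\varphi)\neq 0$ the method gives no bound on the dimension of the locus of $T_{333}$-sections, so your "crucial" simplification is exactly what breaks the argument.

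The repair is what the paper does: take a general member of $W_{T_{333}}$, whose local equation contains cubic terms divisible by $x_1$, so that $[x_1]\neq 0$ in $T^1_{Y_p}$ and $\rank(\varphi)=5$ can actually occur (and is verified computationally for an explicit example). The price is that the modulus direction is no longer a visible monomial of Arnold's normal form; instead it is identified as the unique weight-$1$ element of the computed monomial basis of $T^1_{Y_p}$ (there $x_4^3$, for the weights $\mathrm{wt}(x_1)=1/2$, $\mathrm{wt}(x_2)=\mathrm{wt}(x_3)=\mathrm{wt}(x_4)=1/3$), since weight-$1$ deformations preserve quasihomogeneity and hence the singularity type while lower-weight ones destroy it. One then checks directly that this generator does not lie in $\im(\varphi)$, which together with injectivity of $\varphi$ gives $\im(\varphi)\cap T_0\Sigma=0$. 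Your concluding globalization step (openness/semicontinuity over $W_{T_{333}}$) is fine and matches the paper's implicit argument.
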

\begin{proof}
    We perform the analysis of Proposition \ref{prop: injectivity} in detail for $K=T_{333}.$ Consider the cubic fourfold $X:=\{F=0\}\subset \bP^5$ where

    \begin{equation*}
\begin{split}
    F=  x_5(- 71x_1^2 &- 20x_1x_2  + 93x_1x_3  + 80x_1x_4 + 42x_1x_5
    + 76x_0x_1- 8x_2^2  \\
    &+ 47x_2x_3 - 62x_2x_4+ 46x_2x_5 - 25x_0x_2  - 6x_3^2 + 65x_3x_4 - 70x_3x_5  \\
    & - 55x_0x_3 - 75x_4^2+ 94x_4x_5 - 15x_0x_4 + 41x_5^2 
    + 44x_0x_5 + 25x_0^2)+ f,
\end{split}
\end{equation*}
and 
\begin{equation*}
    \begin{split}
        f= x_0x_1^2 &+ -37x_1^3 - 13x_1^2x_2 - 28x_1^2x_3 + 53x_1^2x_4 - 20x_1x_3^2 - 31x_1x_2^2 \\
        &+ 71x_1x_2x_3 - 81x_1x_2x_4- 22x_1x_3x_4- 96x_1x_4^2 + 42x_2^3 + 55x_2^2x_3  \\
        &+ 36x_2^2x_4+ 63x_2x_3^2- 88x_2x_3x_4+ 64x_2x_4^2- 44x_3^3 - 59x_3^2x_4+ 44x_3x_4^2 - 19x_4^3.
        \end{split}
\end{equation*}
By construction, $X$ is smooth and $Y=X\cap\{x_5=0\}$ has a unique singularity at $p=[1:0:0:0:0]$ of type $T_{333}$.
We consider the local chart $x_0\neq 0$; after scaling $Y$ has equation $g(x_1,\dots x_4)=f(1,x_1,\dots x_5)=0.$

One computes the Jacobian ideal and finds that $T^1_{Y_p}$ is isomorphic as a $\bC$-vector space to $\bC^8$ with basis:
$$\{1, x_2, x_3, x_4, x_2x_4, x_3x_4, x_4^2, x_4^3\}.$$
The global-to-local map $\varphi:H^0(Y,\calO(1))\rightarrow T^1_{Y_p}$ is given by a matrix as before - one computes that $\rank(\varphi)=5$ and this map is injective.

Let $t_1,\dots,t_8$ be coordinates for $T^1_{Y_p}$ with respect to the basis above. It remains to check that $U:=\im(\varphi)\subset T^1_{Y_p}$ does not contain the 1-dimensional modular direction, i.e., the stratum where the singularity $T_{333}$ deforms. First we claim that the modular direction corresponds to $t_1=t_2=\dots =t_7=0.$
The local equation $g(x_1,\dots x_4)$ is quasihomogenous of degree one with weights $wt(x_1)=1/2$, $wt(x_2)=wt(x_3)=wt(x_4)=1/3$, and $T^1_{Y_p}$ is naturally a graded algebra. The total space of the miniversal deformation is given as:

$$g_t := g + \underbrace{t_1}_{\text{weight 0}} + \underbrace{t_2 x_2 + t_3 x_3 + t_4 x_4}_{\text{weight } 1/3} + \underbrace{t_5 x_2 x_4 + t_6 x_3 x_4 + t_7 x_4^2}_{\text{weight }2/3} + \underbrace{t_8 x_4^3}_{\text{weight 1}} = 0.$$ We see that deformations of total weight 1 (i.e. $t_1=t_2=\dots t_7=0$) preserve the degree of $g$ ($g_t$ remains quasihomogenous of degree 1), and thus preserves the singularity type. This fails as soon as one of $t_i$, $i\neq 8$, becomes nonzero. It follows that the modular direction is as claimed.

To conclude the proof, we must verify that the vector $(0,\dots ,0,1)\in T^1_{Y_p},$ a generator for this modular direction, is not contained in $U$. This can be easily checked via \verb|MAGMA|. It follows that since the modular line is not contained in $U$, it must intersect $U$ in finitely many points.
\end{proof}

\subsection{Proof of Theorem \ref{theorem: divisors}}\label{subsec: proof thm div}

We are now ready to prove \Cref{theorem: divisors}. We assume the notations of this section.

\begin{proof}[Proof of \Cref{theorem: divisors}]
    Let  $K\in\{E_6, D_6, D_5+A_1, D_4+A_2, T_{333}\}$ and consider the subset $W_K\subset \bP(H^0(\bP^5, \calO(3)))$ parametrising cubic fourfolds with equation of the form:
    \begin{equation}
        F_K:=x_5q(x_0,\ldots,x_5)+f_K=0
    \end{equation}
    as before.
    Notice that $W_K$ can be identified with its preimage in $H^0(\bP^5, \calO(3))$, as the Equation \ref{eqn: fourfold} contains an $x_0x_1x_2$ or $x_0x_1^2$ term with fixed coefficient $1$ for each $K$. 
    We abuse notation and call this subset also $W_K$. 
    By definition, the locus $D_{K}$ is the closure of the image in $\calC$ of an open subset of $W_K$. 
    Since $W_K$ is isomorphic to an affine space and thus is irreducible, $D_{K}$ is irreducible as well.

    \begin{spacing}{1.25}
    Similarly, let $W_{D_4+2A_1}^0,W_{D_4+2A_1}^1\subset H^0(\bP^5, \calO(3))$ be the loci parametrising cubic fourfolds given by$F_K=0$ with $f_K:=f^0_{D_4+2A_1},f^1_{D_4+2A_1}$ respectively, as in Proposition \ref{prop: equations}. 
    By the same argument each $W_{D_4+2A_1}^i$ can be identified with its image in $\bP(H^0(\bP^5,\calO(3)))$. 
    Each $W_{D_4+2A_1}^i$ is isomorphic to an affine space and is irreducible, thus each component $D_{D_4+2A_1}^0, D_{D_4+2A_1}^1$ is irreducible as well.
  \end{spacing}
  
    The quadratic form $q(x_0,\dots x_5)$ has $21$ parameters. The equation $f_K$ depends on $p_K$ parameters, where $p_K$ is as in the table below.

\begin{table}[h!]
\centering
\begin{tabular}{c|c|c|c|c|c|c|c}

$K$ & $E_6$ & $D_6$ & $D_5+A_1$& $D_4+A_2$ & $D_4+2A_1$, $\sigma=0$ & $D_4+2A_1$, $\sigma=1$ & $T_{333}$ \\ \hline
$p_K$   & 16   & 15   & 13   & 12  & 10  & 12 & 20 \\ 
\end{tabular}

\label{tab: parameter counts}
\end{table}
        
    Let $d_K$ be the dimension of the intersection of $W_K$ and the $\GL(6)$-orbit of a general element $w_K\in W_K$. 
    We have 
    \begin{equation}\label{eqn: dimension}
        \dim D_K=\dim W_K-\dim(W_K\cap (w\cdot\GL(6)))=(21+p_K)-d_K.
    \end{equation}
    We claim that $d_K$ is as in the following table:

    \begin{table}[h!]
\centering
\begin{tabular}{c|c|c|c|c|c|c|c}

$K$ & $E_6$ & $D_6$ & $D_5+A_1$& $D_4+A_2$ & $D_4+2A_1$, $\sigma=0$ & $D_4+2A_1$, $\sigma=1$ & $T_{333}$ \\ \hline
$d_K$   & 18   & 17   &  15  & 14   & 12   & 14 & 23\\ 
\end{tabular}

\label{tab: dK counts}
\end{table}
    
   Assuming the claim, the Theorem follows from Equation \ref{eqn: dimension}; it remains to compute $d_K$.
    
    \medskip
    \begin{spacing}{1.25}
    Consider the subgroup $G<\GL(6)$ consisting of elements preserving the plane $x_5=0$. By \Cref{E6 discrete fibers}, we have $\dim(W_K\cap w_K\cdot G)=\dim(W_K\cap w_K\cdot\GL(6))=d_K$.
    Indeed, let $Y_1,\ldots,Y_m$ be all the $K$-hyperplane sections of $X=V(w_K)\subset \bP^5$. There are suitable coordinate changes for each $i=1,\ldots,m$ such that $X$ is isomorphic to $X_i:=V(w_K^i)$ for $w_K^i\in W_K$ and $Y_i$ is given by $x_5=0$. Then  $W_K\cap w_K\cdot GL(6)=\cup_{i=1}^m(W_K\cap w_K^i\cdot G)$ and $\dim(W_K\cap w_K\cdot GL(6))=\dim(W_K\cap w_K^i\cdot G)$ for all $i=1,\ldots,m$ by the generality assumption.
    Notice that a general element $w_K\in W_K$ is smooth and thus GIT-semistable. Hence it has finite stabilizer and $d_K$ is equal to the dimension of $H_{w_K}:=\{g\in G \mid g\cdot w_K\in W_K\}$.
    \end{spacing}
    
    The group $G$ is isomorphic to the semidirect product $\GL(5)\ltimes(\bC^5\times \bC^*)$, where $\GL(5)$ is the subgroup of $\GL(6)$ acting on the first five coordinate functions $x_0,\ldots,x_4$. 
    The group $\bC^5\times \bC^*$ acts as the group of transformations of the form $x_i\mapsto \alpha_ix_5+x_i$, $x_5\mapsto \lambda x_5$, where $(\alpha_0,\ldots,\alpha_4,\lambda)\in \bC^5\times \bC^*$.
    
    We will now fix a general element $u_K$ in the set $U_K\subset H^0(\bP^4,\calO(3))$ of polynomials of the form $f_K\in \bC[x_0,\dots x_4]$ and describe the subset $H_{u_K}$ of $\GL(5)$ consisting of elements $h\in\GL(5)$ such that $u_K\cdot h$ is contained in $U_K$.  We will describe $H_{u_K}$ for each $K$ separately.  
    Notice that the unique singular point $p:=[1:0:0:0:0]$ of the threefold defined by $u_K$ has to be fixed. If $p$ is fixed, then an element in $H_{u_K}$ has the form: 
    $$x_0\mapsto a_{00}x_0+\ldots a_{04}x_4, \hspace{1cm} x_i\mapsto a_{i1}x_1+\ldots a_{i4}x_4, \text{ for } i=1,\dots 4.$$ For each $K$, we have $d_K=\dim(W_K\cap w_K\cdot G)=\dim(U_K\cap u_K\cdot\GL(5))+\dim(\bC^5\times \bC^*)=\dim H_{u_K}+6$.
    
      \smallskip
      
    \noindent\textbf{Case 1: }Let $K=E_6$. Recall that $u_{E_6}$ has the form:
    \[
        x_0x_1x_2+x_1q_1(x_1,x_3,x_4)+ x_2q_2(x_2,x_3,x_4)+x_1x_2l(x_1,\ldots,x_4)+x_3^3.
    \]
    As there is exactly one term in $u_{E_6}$ containing $x_0$, the subset $H_{u_K}\subset \GL(5)$ can only contain elements mapping $x_1\mapsto a_{11}x_1$ and $x_2\mapsto a_{22}x_2$, or $x_1\mapsto a_{12}x_2$ and $x_2\mapsto a_{21}x_1$, i.e., the variables $x_1$ and $x_2$ can only be rescaled and switched. 
    Moreover, since the term $x_0x_1x_2$ has a fixed coefficient $1$, it follows that $a_{00}a_{11}a_{22}=1$ or $a_{00}a_{12}a_{21}=1$. 
    Also, we note that $a_{34}=0$ (since we cannot have a nontrivial coefficient if front of $x_4^3$) and $a_{33}$ is a cube root of unity. 
    It is easy to see that for a transformation $h$ satisfying these properties $u_{E_6}\cdot h\in U_{E_6}$, thus we get that $H_{u_{E_6}}$ is generated by matrices    
    $$
    \begin{pmatrix}
        a_{00} & a_{01} & a_{02} & a_{03} & a_{04}\\
             0 & a_{11} &      0 &      0 &      0\\
             0 &      0 & a_{22} &      0 &      0\\
             0 & a_{31} & a_{32} & a_{33} &      0\\
             0 & a_{41} & a_{42} & a_{43} & a_{44}
    \end{pmatrix} \text{ and }
    \begin{pmatrix}
             1 &      0 &      0 &      0 &      0\\
             0 &      0 &      1 &      0 &      0\\
             0 &      1 &      0 &      0 &      0\\
             0 &      0 &      0 &      1 &      0\\
             0 &      0 &      0 &      0 &      1
    \end{pmatrix}, \text{ where } a_{00}a_{11}a_{22}=1 \text{ and } a_{33}^3=1.
    $$
    The set $H_{u_{E_6}}$ is of dimension $12$. We see $d_{E_6}=\dim H_{u_{E_6}}+6=12+6=18$.

    \medskip    

    \noindent \textbf{Case 2:} In the $K=D_6$ case, we proceed similarly. Again, there is exactly one term in $u_{D_6}$ containing $x_0$, and we can only have $x_1\mapsto a_{11}x_1$ and $x_2\mapsto a_{22}x_2$, or $x_1\mapsto a_{12}x_2$ and $x_2\mapsto a_{21}x_1$; we also have $a_{00}a_{11}a_{22}=1$ or $a_{00}a_{12}a_{21}=1$. Since $x_3^2x_4$ is the only monomial of $u_{D_6}$ in variables $x_3$ and $x_4$, we conclude that $a_{34}=a_{43}=0$ and $a_{33}^2a_{44}=1$. Finally, we notice that the form of $u_{D_6}$ is not symmetric in $x_1$ and $x_2$, thus transformations with $x_1\mapsto a_{12}x_2$ and $x_2\mapsto a_{21}x_1$ cannot be in $H_{u_{D_6}}$. We see that $H_{u_{D_6}}$ consists of matrices
    $$
    \begin{pmatrix}
        a_{00} & a_{01} & a_{02} & a_{03} & a_{04}\\
             0 & a_{11} &      0 &      0 &      0\\
             0 &      0 & a_{22} &      0 &      0\\
             0 & a_{31} & a_{32} & a_{33} &      0\\
             0 & a_{41} & a_{42} &      0 & a_{44}
    \end{pmatrix}, \text{ where } a_{00}a_{11}a_{22}=1 \text{ and } a_{33}^2a_{44}=1.
    $$
    The set $H_{D_6}$ is of dimension $11$ and $d_{D_6}=11+6=17$.

    \medskip

    \noindent\textbf{Case 3:} In the $K=D_5+A_1$ case, we see that $H_{u_{D_5+A_1}}$ consists of matrices:
$$
\begin{pmatrix}
        a_{00} & a_{01} & a_{02} & a_{03} & a_{04}\\
        0 & a_{11} &      0 &      0 &      0\\
        0 &      0 & a_{22} &      0 &      0\\
        0 & a_{31} &      0 & a_{33} &      0\\
        0 & a_{41} &      0 &      0 & a_{44}
\end{pmatrix}, 
$$
where $a_{00}a_{11}a_{22}=1$ and $a_{33}^2a_{44}=1.$
The set of matrices is $9$-dimensional and $d_K=15$. 

\medskip

\noindent\textbf{Case 4:} In the case $K=D_4+A_2$, we see that $H_{u_{D_4+A_2}}$ consists of matrices:
$$
\begin{pmatrix}
        a_{00} & a_{01} & a_{02} & a_{03} & a_{04}\\
        0 & a_{11} &      0 &      0 &      0\\
        0 &      0 & a_{22} &      0 &      0\\
        0 & a_{31} &      0 & a_{33} & a_{34}\\
        0 & a_{41} &      0 & a_{43} & a_{44}
\end{pmatrix},$$
where $a_{00}a_{11}a_{22}=1$ and there are 18 solutions for $a_{33},a_{34},a_{43},a_{44}$. The set of matrices is $8$-dimensional and $d_K=14$. 

\medskip

\noindent\textbf{Case 5:} In the case $K=D_4+2A_1$ and $\sigma=0$, we see that $H_{u_{D_4+2A_1}}^0$ consists of matrices:
$$
\begin{pmatrix}
        a_{00} & a_{01} & a_{02} & a_{03} & a_{04}\\
        0 & a_{11} &      0 &      0 &      0\\
        0 &      0 & a_{22} &      0 &      0\\
        0 &      0 &      0 & a_{33} & a_{34}\\
        0 &      0 &      0 & a_{43} & a_{44}
\end{pmatrix},
\begin{pmatrix}
        1 &      0 &      0 &      0 &      0\\
        0 &      0 &      1 &      0 &      0\\
        0 &      1 &      0 &      0 &      0\\
        0 &      0 &      0 &      1 &      0\\
        0 &      0 &      0 &      0 &      1
\end{pmatrix},
$$
where $a_{00}a_{11}a_{22}=1$ and there are 18 solutions for $a_{33},a_{34},a_{43},a_{44}$.
The set of matrices is $6$-dimensional and $d_K=12$.
\medskip

\noindent \textbf{Case: 6} In the case $K=D_4+2A_1$ and $\sigma=1$, we see that $H_{u_{D_4+2A_1}}^1$ consists of matrices:
$$
\begin{pmatrix}
        a_{00} & a_{01} & a_{02} & a_{03} & a_{04}\\
        0 & a_{11} &      0 &      0 &      0\\
       0 &      0 & a_{22} &      0 &      0\\
       0 & a_{31} &      0 & a_{33} & a_{34}\\
      0 & a_{41} &      0 & a_{43} & a_{44}
\end{pmatrix}, $$
where $a_{00}a_{11}a_{22}=1$ and there are 18 solutions for $a_{33},a_{34},a_{43},a_{44}$. The set of matrices is $8$-dimensional and $d_K=14$. 
    
    \noindent \textbf{Case 7:} In the $K= T_{333}$ case, we can only have $x_1\mapsto \pm x_1$ and we see that $H_{u_{T_{333}}}$ consists of matrices
    $$
    \begin{pmatrix}
        a_{00} & a_{01} & a_{02} & a_{03} & a_{04}\\
             0 &  \pm 1 &      0 &      0 &      0\\
             0 & a_{21} & a_{22} & a_{23} & a_{24}\\
             0 & a_{31} & a_{32} & a_{33} & a_{34}\\
             0 & a_{41} & a_{42} & a_{43} & a_{44}
    \end{pmatrix},
    $$
    thus $d_{T_{333}}=17+6=23$, proving the claim.
\end{proof}

\begin{remark}
    A cubic fourfold with a $(D_4+2A_1)$-section with $\defect=1$, is necessarily special by \cite[Theorem 1.5]{MV25} - such a cubic fourfold contains a plane. An example of such a cubic is given by:
    \begin{equation}\label{ex: D4+2A1 with defect}
\begin{split}
    F= x_5(&- 42x_1^2+ 93x_1x_2+ 29x_1x_3 - 75x_1x_4 - 16x_1x_5 + 80x_1x_0 - 68x_2^2- 4x_2x_3\\
    &+ 67x_2x_4 - 21x_2x_5 + 28x_2x_0 - 
    97x_3^2 + 29x_3x_4 - 98x_3x_5 + 38x_3x_0- 10x_4^2 \\
    &+ 25x_4x_5 + 2x_4x_0 - 97x_52 - 67x_5x_0 + 30x_0^2)+x_0x_1x_2+x_1(13x_1^3- 35x_1^2x_3 \\
    &+ 79x_1^2x_4- 8x_1x_3^2 + 30x_1x_3x_4 
    - 16x_1x_4^2)\\
    &+x_2x_4(- 74x_3  - 53x_4)+x_1x_2(51x_1- 25x_2- 86x_3 - 88x_4)   + x_3^2x_4  + x_4^3. 
\end{split}
\end{equation}
Since a general cubic fourfold with a $(D_4+2A_1)$-section admits finitely many sections, it follows from Corollary \ref{E6 discrete fibers} and Theorem \ref{theorem: divisors} that $D_{D_4+2A_1}^1$ coincides with $\calC_{8}.$
\end{remark}

\section{Proof of main theorem}\label{sec: non Hassett}

In this section, we will prove the following result, which implies Theorems \ref{Main theorem}, \ref{Main theorem2}:
\begin{theorem}\label{theorem: main}
    The irreducible divisors $D_{E_6}, D_{D_6}, D_{D_5+A_1},D_{D_4+A_2},D_{D_4+2A_1}^0\subset \calC$ are not Hassett divisors. Further, the codimension 2 locus $D_{T_{333}}\subset \calC$ is not contained in a Hassett divisor.
\end{theorem}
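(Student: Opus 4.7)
The plan, following the computational approach of Addington--Auel \cite{AA}, is to exhibit, for each $K$, a single smooth cubic fourfold $X_K\in D_K$ whose algebraic lattice $A(X_K):=H^{2,2}(X_K)\cap H^4(X_K,\bZ)$ equals $\bZ\cdot h^2$. Since the union of all Hassett divisors $\calC_d$ coincides with the locus $\{\rank A(X)\geq 2\}$ of special cubic fourfolds, producing such an $X_K$ shows that $D_K$ is not contained in any $\calC_d$; for the five divisorial cases this forces $D_K$ to differ from every Hassett divisor, and for $K=T_{333}$ it rules out containment in any Hassett divisor.

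First I would specialize the parameters of the general equation $F_K$ in Equation \ref{eqn: fourfold} (combined with the normal forms in Proposition \ref{prop: equations}) to obtain, for each $K\in\{E_6,D_6,D_5+A_1,D_4+A_2,(D_4+2A_1)^0,T_{333}\}$, an explicit cubic fourfold $X_K$ defined over a small finite field $\bF_p$. Using \texttt{MAGMA} one verifies that $X_K$ is smooth and that the section $\{x_5=0\}\cap X_K$ carries exactly the prescribed singularity $K$; the characteristic must be chosen so as not to degenerate the normal form (e.g.\ $p\neq 2$ when $D_n$ or $E_6$ singularities are present). The decisive computation is the characteristic polynomial $P_{X_K}(T)\in\bZ[T]$ of the geometric Frobenius $\Phi_p$ acting on $H^4_{\et}(X_K\otimes\bar\bF_p,\bQ_\ell)$, a polynomial of degree $23$. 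By the Weil conjectures and the functional equation, $P_{X_K}$ is determined by the point counts $|X_K(\bF_{p^k})|$ for $k=1,\dots,11$, which I would obtain via the code \cite{F2code} accompanying \cite{AKPW}.

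Factoring $P_{X_K}(T)$ over $\bZ$, I would then identify the eigenvalues $\alpha$ of $\Phi_p$ for which $\alpha/p^2$ is a root of unity. By the specialization argument standard in this context (cf.\ \cite{AA,CHK}, going back to van Luijk and Charles), the number of such eigenvalues (with multiplicity) bounds $\rank A(\tilde X_K)$ from above for any smooth characteristic-zero lift $\tilde X_K$ of $X_K$. In each of the six cases one aims to verify that the unique cyclotomic eigenvalue is $p^2$ itself, so that $\rank A(\tilde X_K)=1$; since $W_K$ is essentially an affine space (Proposition \ref{prop: equations}), a characteristic-zero lift of $X_K$ exists inside $W_K$, producing the required element of $D_K$ with algebraic lattice $\bZ\cdot h^2$.

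The main obstacle is entirely computational. Extracting an accurate $P_{X_K}$ requires point counts over $\bF_{p^{k}}$ up to $k=11$, feasible only with highly optimised software, which essentially dictates the choice $p=2$. A secondary difficulty is the search for suitable specializations: for each $K$ one must locate a choice of the $21+p_K$ coefficients that is simultaneously (i) smooth, (ii) genuinely $K$-singular in the strong sense of Proposition \ref{prop: equations} rather than a further degeneration, and (iii) yields a Frobenius polynomial free of spurious cyclotomic eigenvalues; several random trials may be needed before all three hold, and for the cases in which the normal form degenerates at $p=2$ an alternative prime together with a less efficient point-counting routine may have to be employed.
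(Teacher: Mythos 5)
Your proposal is correct and follows essentially the same route as the paper: the Addington--Auel method of counting points of a reduction modulo $2$ over $\bF_{2^k}$ for $k\le 11$, computing the Frobenius characteristic polynomial, and using the absence of cyclotomic factors (beyond the one accounting for $h^2$) to bound $\rank A(X)$ by $1$ via specialization, applied to one explicit example in each $D_K$. The only cosmetic difference is in workflow: the paper constructs each example over $\bQ$, verifies smoothness and the singularity type of the hyperplane section in characteristic zero, and only then reduces mod $2$ (needing only smoothness of the reduction), so your concern about the normal forms degenerating at $p=2$ does not actually arise.
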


We will prove \Cref{theorem: main} with computer assisted methods, following the techniques of Addington and Auel outlined in \cite{AA}. Their method is based on a strategy of van Luijk \cite{vanLuijik} (see also \cite{EJ1, EJ2}), for producing explicit K3 surfaces. We outline their method in Section \ref{subsec: AA method}, before applying to our divisors in Section \ref{section proof: main}. 

\subsection{The Addington-Auel method}\label{subsec: AA method}

In \cite{AA}, Addington and Auel propose a computer-assisted method of verifying whether a divisor $D$ in the moduli space of cubic fourfolds $\calC$ is not a Hassett divisor. The starting point of their method is the proposition below, which we have written to apply directly to the cubic fourfold case:

\begin{proposition}\label{prop: char poly} \cite[Proposition 2.1]{AA}
    Let $R=\bZ_{(2)}$, $k=\bF_2$, $L=\bQ$, and let $X_R$ be a smooth cubic fourfold over $R$. 
    Let $X^{an}$ denote the complex manifold associated to the complex cubic fourfold $X_{\bC}$. 
    Let $\Phi: X_k\rightarrow X_k$ be the absolute Frobenius morphism, and let $l\neq 2$ be a prime number. 
    Denote by
    $$V=H^4_{\et,prim}(X_{\bar{k}},\bQ_l(2))\cong\bQ_l^{22},$$ and let $\Phi^*$ be the automorphism of $V$ induced by $\Phi\times 1$ on $X_k\times \bar{k}$. 
    
    Then the rank of $H^{2,2}_{prim}(X^{an},\bZ)$ is bounded above by the number of eigenvalues of $\Phi^*$ that are roots of unity, counted with multiplicity.
\end{proposition}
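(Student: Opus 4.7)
The plan is to show that every primitive integral $(2,2)$-Hodge class on $X^{\mathrm{an}}$ maps, under a chain of comparison isomorphisms, into the subspace of $V$ on which $\Phi^*$ acts with root-of-unity eigenvalues. The rank bound will then follow from linear algebra.

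First, I will assemble the comparison isomorphisms. Since $X_R$ is smooth and proper over $R = \bZ_{(2)}$ and $l \neq 2$, smooth-proper base change yields a canonical isomorphism
\[
H^4_{\et}(X_{\bar L}, \bQ_l(2)) \;\cong\; H^4_{\et}(X_{\bar k}, \bQ_l(2)),
\]
equivariant under the decomposition group at $(2)$. Composing with Artin's comparison theorem, applied via a fixed embedding $\bar L \hookrightarrow \bC$, gives an identification
\[
H^4_{\mathrm{prim}}(X^{\mathrm{an}}, \bQ) \otimes_{\bQ} \bQ_l \;\cong\; V,
\]
along which the $l$-adic and Betti cycle class maps agree (up to the Tate twist built into $V$).

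Second, I will invoke the Hodge conjecture for cubic fourfolds (classical, due to Zucker) to write any class $\alpha \in H^{2,2}_{\mathrm{prim}}(X^{\mathrm{an}}, \bZ)$ as a $\bQ$-linear combination of cycle classes of algebraic cycles $Z_1, \ldots, Z_r$ on $X_{\bC}$. Each $Z_i$ is defined over some finite extension $L'/L$. Taking the Zariski closure of $Z_i$ inside $X_{R'}$, where $R'$ is the integral closure of $R$ in $L'$, and restricting to a closed point $\mathfrak{p}$ over $(2)$, yields an algebraic cycle on $X_{\kappa(\mathfrak{p})}$ for a finite extension $\kappa(\mathfrak{p})/k$. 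By compatibility of the $l$-adic cycle class map with specialization, the image of $\alpha$ in $V$ equals the corresponding $\bQ$-combination of cycle classes of these specialized cycles, and a sufficiently high power of $\Phi^*$ fixes each of them.

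It follows that the composite $H^{2,2}_{\mathrm{prim}}(X^{\mathrm{an}}, \bZ) \otimes_{\bZ} \bQ_l \hookrightarrow V$ factors through the union of generalized $\Phi^*$-eigenspaces for root-of-unity eigenvalues, whose total dimension is precisely the number of such eigenvalues counted with multiplicity, yielding the stated bound. The main obstacle is the cycle-class compatibility with specialization: when the Zariski closure of $Z_i$ in $X_{R'}$ fails to be flat over $R'$, one cannot naively restrict to the special fiber and expect the cycle classes to match. This is resolved either by refining the cycle to a flat family or, more cleanly, by invoking Fulton's specialization homomorphism from intersection theory, both of which are standard.
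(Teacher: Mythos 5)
Your argument is correct and is essentially the proof given in the cited source: the paper itself does not prove this proposition but quotes it from \cite[Proposition 2.1]{AA}, whose argument likewise combines smooth--proper base change and Artin comparison with Zucker's Hodge conjecture for cubic fourfolds and the specialization of algebraic cycles, so that classes of cycles defined over a finite field are fixed by a power of Frobenius. The only points worth making explicit are the ones you already flag: the Tate twist in $V$ (so that Frobenius fixes, rather than scales, the specialized cycle classes) and the use of Fulton's specialization homomorphism in place of naive Zariski closure.
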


\begin{corollary} \cite[Section 2]{AA} \label{non-special}
    If none of the eigenvalues of $\Phi^*$ are roots of unity, or equivalently if the characteristic polynomial $\chi(t):=\det(t\cdot\Id_V-\Phi^*)$ has no cyclotomic factor, then $X^{an}$ is a non-special cubic fourfold. In particular, if $\chi(t)$ is irreducible over $\bQ$ and not all of its coefficients are integers, then $X^{an}$ is non-special.
\end{corollary}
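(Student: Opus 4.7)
The corollary follows from Proposition \ref{prop: char poly} together with the standard characterisation of special cubic fourfolds. The plan is as follows.

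First I would recall that a smooth cubic fourfold $X$ is special if and only if the primitive part $H^{2,2}_{\mathrm{prim}}(X^{an},\bZ)$ of its algebraic lattice has positive rank, since $X$ being non-special means $A(X)$ is generated by $h^2$. So the first assertion is immediate from Proposition \ref{prop: char poly}: if no eigenvalue of $\Phi^*$ is a root of unity, the stated bound forces $\rank H^{2,2}_{\mathrm{prim}}(X^{an},\bZ)=0$, hence $X^{an}$ is non-special. The equivalence ``no eigenvalue of $\Phi^*$ is a root of unity $\iff$ $\chi(t)$ has no cyclotomic factor'' is formal: an eigenvalue $\lambda$ of $\Phi^*$ is a root of unity precisely when its minimal polynomial over $\bQ$ is a cyclotomic polynomial, and this minimal polynomial divides $\chi(t)$; conversely, any cyclotomic divisor of $\chi(t)$ contributes roots of unity as eigenvalues.

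For the ``in particular'' clause, suppose $\chi(t)$ is irreducible over $\bQ$ and at least one of its coefficients fails to be an integer. If $\chi(t)$ admitted a cyclotomic factor $C(t)$, then since both $\chi(t)$ and $C(t)$ are monic and irreducible in $\bQ[t]$ (the latter because cyclotomic polynomials are irreducible over $\bQ$), divisibility would force $\chi(t)=C(t)$. But then $\chi(t)$ would have integer coefficients, contradicting the hypothesis. Hence $\chi(t)$ has no cyclotomic factor and the first part of the corollary applies.

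There is essentially no obstacle: the corollary is a formal consequence of Proposition \ref{prop: char poly} combined with the irreducibility and integrality of cyclotomic polynomials. The only subtle point worth flagging is that $\chi(t)$, being the characteristic polynomial of Frobenius on the Tate-twisted \'etale cohomology $H^4_{\et,\mathrm{prim}}(X_{\bar{k}},\bQ_l(2))$, a priori lies only in $\bQ[t]$ rather than $\bZ[t]$---its coefficients differ from those of the integer characteristic polynomial on $H^4_{\et,\mathrm{prim}}(X_{\bar k},\bQ_l)$ by appropriate powers of $q=2$---so the hypothesis ``not all coefficients are integers'' is both meaningful and straightforward to check in practice, which is exactly what makes this criterion amenable to computer verification in the sequel.
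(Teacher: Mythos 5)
Your proposal is correct and follows exactly the argument the paper implicitly relies on (the paper gives no proof, citing \cite[Section 2]{AA}, where the same reasoning appears): the first claim is immediate from Proposition \ref{prop: char poly} once one notes that non-special means $H^{2,2}_{\mathrm{prim}}$ has rank zero, and the ``in particular'' clause follows because a monic irreducible $\chi(t)$ with a cyclotomic factor would have to equal that factor and hence lie in $\bZ[t]$. Your closing remark correctly identifies why the Tate twist makes the non-integrality hypothesis meaningful; nothing is missing.
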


Addington and Auel developed and implemented an algorithm \cite[Section 3]{AA} for computing the characteristic polynomial as above.
We briefly outline the steps from \cite{AA} one can follow to confirm that a divisor $D\subset\calC$ is non-Hassett:
\begin{itemize}
    \item[(1)] Choose a smooth cubic fourfold $X\in D$ defined by an equation with rational coefficients.
    \item[(2)] Check if $X$ has good reduction modulo $2$.
    \item[(3)] If yes in (2), count the points of $X_{\bF_{2^m}}$ for all $1\leq m\leq 11$ using the conic bundle algorithm explained in \cite[Section 3]{AA}.
    \item[(4)] Find the characteristic polynomial $\chi(t)$ using the point counts from the previous step (using the Lefschetz trace formula and Newton's identities, see \cite[Section 2]{AA}).
    \item[(5)] Check if $\chi(t)$ is irreducible over $\bQ$ and not all of its coefficients are integers.
    \item[(6)] If yes in (5), conclude that $X$ is non-special by Corollary \ref{non-special} and that $D$ is non-Hassett.
\end{itemize}
The computations in steps (3), (4) and (5) can be easily done thanks to the library \verb|CubicLib.m| introduced in the accompanying code \cite{F2code} to \cite{AKPW}. Alternatively, if one has the full census of cubic fourfolds over $\bF_2$ already compiled, one can simply look up the characteristic polynomial for the reduction $X_{\bF_2}.$

\subsection{Proof of Theorem \ref{theorem: main}} \label{section proof: main}

\begin{proof}[Proof of \Cref{theorem: main}]\label{proof: main theorem}
    
    For each $K\in \{E_6, D_6,D_5+A_1,D_4+A_2,D_4+2A_1,T_{333}\}$, we randomly generate an equation of the form of Equation \ref{eqn: fourfold} via \verb|Magma| and verify that the cubic $X$ given by $F=0$ is smooth over $\bC$. 
    The $K$-hyperplane section is obtained as $Y:=\{x_5=0\}\cap X,$ so $X$ defines a point in $D_K$ by construction. 
    We verify via \verb|Magma| that $Y$ has exactly the expected singularity combination $X$. 
    The polynomial $F_2=0$ is the reduction of $X$ modulo $2$; one verifies this is smooth over $\overline{\bF}_2.$ 
    One then computes the characteristic polynomial $\chi(t)$. 
    We repeat these steps until we find a cubic fourfold as above with $\chi(t)$ irreducible over $\bQ$. 
    This can take many attempts - we present a successful output below. 
    
\noindent\textbf{Case 1: $E_6$.}
 We considered the cubic with defining equation:
\begin{equation}\label{ex: E6}
\begin{split}
F =x_5&(89x_0^2-93x_5^2-51x_5^2+37x_0x_4-86x_4x_5+87x_4^2-60x_0x_3\\
       & -7x_3x_5-100x_3x_4-100x_0x_2-18x_2x_5+31x_2x_4+52x_2x_3\\
       &+71x_2^2-32x_0x_1+86x_1x_5+x_1x_4-30x_1x_3-41x_1x_2+13x_1^2)+x_0x_1x_2\\
       & - 24x_1^3 - 73x_1^2x_2 + 44x_1^2x_3 + 9x_1^2x_4 + 95x_1x_2^2 + 
    80x_1x_2x_3 - 62x_1x_2x_4 - 47x_1x_3^2 \\
    &+ 12x_1x_3x_4 - 7x_1x_4^2 + 15x_2^3 +
    33x_2^2x_3 - 88x_2^2x_4 + 29x_2x_3^2 - 33x_2x_3x_4 - 49x_2x_4^2 + x_3^3.
\end{split}
\end{equation}
Over $\bF_2:$
\begin{align*}
F_2:= x_1^2x_2 &+ x_1^2x_4 + x_1^2x_5 + x_1x_2^2 + x_1x_2x_5 + x_1x_2x_0 + x_1x_3^2 + x_1x_4^2 + x_1x_4x_5 + x_2^3 \\
&+ x_2^2x_3 + x_2^2x_5 + x_2x_3^2 + x_2x_3x_4 + x_2x_4^2 + x_2x_4x_5 + x_3^3 + x_3x_5^2 + x_4^2x_5 \\
&+ x_4x_5x_0 + x_5^3 + x_5^2x_0 + x_5x_0^2.
\end{align*}
The characteristic polynomial is:
\[
    \chi(t)=t^{22}-\frac{1}{2}t^{19}+\frac{1}{2}t^{16}+\frac{1}{2}t^{15}-\frac{1}{2}t^{14}-\frac{1}{2}t^{12}+\frac{1}{2}t^{11}-\frac{1}{2}t^{10}-\frac{1}{2}t^8+\frac{1}{2}t^7+\frac{1}{2}t^6-\frac{1}{2}t^3+1.
\]

\noindent\textbf{Case 2: $D_6$.} We considered the cubic with defining equation:

\begin{equation}\label{ex: D6} 
\begin{split}   F =  x_5&(93x_5^2 -39x_0^2- 83x_5x_0+ 36x_0x_4 + 72x_4x_5- 13x_0x_3 - 78x_4^2 + 67x_3x_4 \\
    &- 17x_3x_5 + 17x_3^2- 25x_2x_3 + 38x_2x_4 - 98x_2x_5 + 3x_0x_2+ 94x_2^2+ 66x_1x_5 \\
    &+ 66x_0x_1- 70x_1x_4- 24x_1x_3- 40x_1x_2- 90x_1^2)+ x_0x_1x_2+ x_1(58x_1^2+ 8x_1x_3 \\
    &+ 60x_1x_4+ 17x_3^2 + x_3x_4  - 7x_4^2)+x_2(- 77x_2^2 + 29x_2x_3- 75x_3^2)+x_2x_4(88x_2+85x_3)\\
    &+x_1x_2(- 95x_1 - 69x_3 - 82x_4)+ x_3^2x_4 .
    \end{split}
\end{equation}
Over $\bF_2:$
\begin{align*}
    F_2 = x_1^2x_2 &+ x_1x_2x_3 + x_1x_2x_0 + x_1x_3^2 + x_1x_3x_4 + x_1x_4^2 + x_2^3 + x_2^2x_3 + x_2x_3^2 + x_2x_3x_4 + x_2x_3x_5 \\
    &+ x_2x_5x_0 + x_3^2x_4 + x_3^2x_5 + x_3x_4x_5 + x_3x_5^2 + x_3x_5x_0 + x_5^3 + x_5^2x_0 + x_5x_0^2.
\end{align*}
The characteristic polynomial is:
\begin{align*} 
\chi(t) = t^{22}& + t^{21} + 1/2t^{20} - 1/2t^{19} - t^{18} - 3/2t^{17} - 1/2t^{16} - 1/2t^{15} + 1/2t^{14} + 1/2t^{13} + t^{12} \\&+ 1/2t^{11}
+ t^{10} + 1/2t^{9} 
+ 1/2t^8 - 1/2t^7 - 1/2t^6 - 3/2t^5 - t^4 - 1/2t^3 + 1/2t^2 + t + 1.
\end{align*}

\noindent\textbf{Case 3: $D_5+A_1$.} We considered the cubic with defining equation:
\begin{equation}\label{ex: D5+A1}
\begin{split}
    F= x_5&(34x_1^2- 100x_1x_2- 62x_1x_3+ 44x_1x_4 + 4x_1x_5 - 80x_0x_1 + 17x_2^2+ 45x_2x_3- 11x_2x_4  \\
    &- 29x_2x_5- 13x_0x_2+ 17x_3^2 - 65x_3x_4 + 64x_3x_5 + 38x_0x_3 + 34x_4^2 - 47x_4x_5 - 74x_4x_0 \\
    &+ 8x_5^2 - 99x_0x_5 - 91x_0^2)+x_0x_1x_2+x_1(-37x_1^2- 31x_1x_3 + 88x_1x_4+ 11x_3^2 - 78x_3x_4 \\
    & + 58x_4^2)+x_2(- 17x_3^2 - 79x_3x_4  - 15x_4^2)+x_1x_2(+ 40x_1- 5x_2 + 100x_3 + 71x_4)  + x_3^2x_4. 
\end{split}
\end{equation}
Over $\bF_2:$
\begin{align*}
    F_2 = &x_1^3 + x_1^2x_3 + x_1x_2^2 + x_1x_2x_4 + x_1x_2x_0 + x_1x_3^2 + x_2^2x_5 + x_2x_3^2 + x_2x_3x_4 + x_2x_3x_5  \\
    &+ x_2x_4^2+ x_2x_4x_5 + x_2x_5^2 + x_2x_5x_0 + x_3^2x_4 + x_3^2x_5 + x_3x_4x_5 + x_4x_5^2 + x_5^2x_0 + x_5x_0^2.
\end{align*}
The characteristic polynomial is:
\begin{align*} 
\chi(t) = t^{22} - 1/2t^{21} &+ 3/2t^{20} - 1/2t^{19} + 1/2t^{18} - 1/2t^{16} - t^{13} + t^{12} \\
&- 3/2t^{11} + t^{10} - t^9 - 1/2t^6 + 1/2t^4 - 1/2t^3 + 3/2t^2 - 1/2t + 1.
\end{align*}

\noindent\textbf{Case 4: $D_4+A_2$.} We considered the cubic with defining equation:
\begin{equation}\label{ex: D4+A2}
\begin{split}
    F= x_5(77x_1^2&+ 13x_1x_2- 95x_1x_3+ 96x_1x_4 - 29x_1x_5 + x_1x_0 - 64x_2^2+ x_2x_3 \\
    &+ 72x_2x_4 - 40x_2x_5 - 4x_2x_0+ 84x_3^2 - 76x_3x_4 + 92x_3x_5 + 91x_3x_0\\
    &- 61x_4^2 + 12x_4x_5 + 64x_4x_0 - 21x_5^2 - 80x_5x_0 + 95x_0^2)+x_0x_1x_2\\
    &+x_1(-96x_1^2+ 29x_1x_3 + 36x_1x_4+82x_3^2 + 21x_3x_4  - 4x_4^2)+x_2(23x_3+4x_4)^2\\
    &+x_1x_2(80x_1+ 63x_2+ 32x_3 + 94x_4)+ x_3^2x_4  + x_4^3. \end{split}
\end{equation}

The characteristic polynomial for $F$ considered over $F_2$ is:
\begin{align*} 
\chi(t) = t^{22} + t^{21} &- t^{20} - 3/2t^{19} + 1/2t^{18} + 1/2t^{17} - t^{16} + 3/2t^{14} + 1/2t^{13} 1/2t^{12}\\
&- 1/2t^{11} - 1/2t^{10} + 1/2t^9 + 3/2t^8 - t^6 + 1/2t^5 + 1/2t^4 - 3/2t^3 - t^2 + t + 1.
\end{align*}

\noindent\textbf{Case 5: $D_4+2A_1$, $\sigma=0$.} We considered the cubic with defining equation:
\begin{equation}\label{ex: D4+2A1 no defect}
\begin{split}
    F= x_5(&-31x_1^2+ 19x_1x_2- 97x_1x_3- 29x_1x_4 - 73x_1x_5 - 53x_1x_0 + 33x_2^2\\
    &- 66x_2x_3- 57x_2x_4 - 29x_2x_5 + 9x_2x_0+ 36x_3^2 - 47x_3x_4 - 3x_3x_5 + 83x_3x_0\\
    &+ 90x_4^2 - 48x_4x_5 - 37x_4x_0 + 60x_5^2 + 31x_5x_0 + 73x_0^2)+ + x_0x_1x_2+x_1(- 29x_3^2 \\
    &- 98x_3x_4  - 34x_4^2)+x_2(93x_3^2 - 72x_3x_4  - 26x_4^2)+x_1x_2(-77x_1  - 76x_2+ 72x_3 - 71x_4)\\
    &+ x_3^2x_4  + x_4^3.
\end{split}
\end{equation}

The characteristic polynomial for $F$ considered over $F_2$ is:
\begin{align*} 
\chi(t) = t^{22} - t^{21} &+ 1/2t^{20} - t^{18} + 1/2t^{17} + 1/2t^{16} + 1/2t^{14} + 1/2t^{13} - t^{12} \\
&+ 1/2t^{11} - t^{10} + 1/2t^9 + 1/2t^8 + 1/2t^6 + 1/2t^5 - t^4 + 1/2t^2 - t + 1.
\end{align*}

\noindent\textbf{Case 7: $T_{333}$.} We considered the cubic with defining equation:

\begin{equation}\label{ex: T_333}
\begin{split}
    F= -37&x_1^3 - 13x_1^2x_2 - 28x_1^2x_3 + 53x_1^2x_4 - 71x_1^2x_5 + x_1^2x_0 - 31x_1x_2^2 + 71x_1x_2x_3 - 81x_1x_2x_4 \\
    &- 20x_1x_2x_5 - 20x_1x_3^2 - 22x_1x_3x_4 + 93x_1x_3x_5 - 96x_1x_4^2 + 80x_1x_4x_5 + 42x_1x_5^2 \\
    &+ 76x_1x_5x_0 + 42x_2^3 + 55x_2^2x_3 + 36x_2^2x_4 - 8x_2^2x_5 + 63x_2x_3^2 - 88x_2x_3x_4 + 47x_2x_3x_5 \\
    &+ 64x_2x_4^2 - 62x_2x_4x_5 + 46x_2x_5^2 - 25x_2x_5x_0 - 44x_3^3 - 59x_3^2x_4 - 6x_3^2x_5 + 44x_3x_4^2 \\
    &+ 65x_3x_4x_5 - 70x_3x_5^2 - 55x_3x_5x_0 - 19x_4^3 - 75x_4^2x_5 + 94x_4x_5^2 - 15x_4x_5x_0 + 41x_5^3 \\
    &+ 44x_5^2x_0 + 25x_5x_0^2.
\end{split}
\end{equation}
Over $\bF_2:$
\begin{align*}
    F_2= x_1^3& + x_1^2x_2 + x_1^2x_4 + x_1^2x_5 + x_1^2x_0 + x_1x_2^2 + x_1x_2x_3 + x_1x_2x_4 + x_1x_3x_5 + x_2^2x_3 + x_2x_3^2 \\
    &+ x_2x_3x_5 + x_2x_5x_0 + x_3^2x_4 + x_3x_4x_5 + x_3x_5x_0 + x_4^3 + x_4^2x_5 + x_4x_5x_0 + x_5^3 + x_5x_0^2.
\end{align*}
The characteristic polynomial is:
\begin{align*}
    \chi(t)= t^{22} - t^{20} + 1/2t^{18} + 1/2t^{11} + 1/2t^4 - t^2 + 1.
\end{align*}
\end{proof}

\bibliographystyle{alpha}
\bibliography{bibliography}

\begin{thebibliography}{AKPW25}

\bibitem[AA18]{AA}
Nicolas Addington and Asher Auel.
\newblock Some non-special cubic fourfolds.
\newblock {\em Doc. Math.}, 23:637--651, 2018.

\bibitem[AKPW24]{F2code}
Asher Auel, Avinash Kulkarni, Jack Petok, and Jonah Weinbaum.
\newblock Accompanying code to “{A} census of cubic fourfolds over
  $\mathbb{F}_2$”.
\newblock \url{https://github.com/JonahWeinbaum/cubic-fourfolds}, May 2024.

\bibitem[AKPW25]{AKPW}
Asher Auel, Avinash Kulkarni, Jack Petok, and Jonah Weinbaum.
\newblock A census of cubic fourfolds over $\mathbb{F}_2$.
\newblock {\em Math. Comp.}, 94:2089--2112, 2025.

\bibitem[CHK19]{CHK}
Edgar Costa, David Harvey, and Kiran~S. Kedlaya.
\newblock Zeta functions of nondegenerate hypersurfaces in toric varieties via
  controlled reduction in {$p$}-adic cohomology.
\newblock In {\em Proceedings of the {T}hirteenth {A}lgorithmic {N}umber
  {T}heory {S}ymposium}, volume~2 of {\em Open Book Ser.}, pages 221--238.
  Math. Sci. Publ., Berkeley, CA, 2019.

\bibitem[DM26]{DM26}
Yajnaseni Dutta and Lisa Marquand.
\newblock Relative compactified {P}rym and {P}icard fibrations associated to
  very good cubic fourfolds.
\newblock {\em Proc. Amer. Math. Soc.}, 154(1):25--34, 2026.

\bibitem[EJ08]{EJ1}
Andreas-Stephan Elsenhans and J\"org Jahnel.
\newblock {$K3$} surfaces of {P}icard rank one and degree two.
\newblock In {\em Algorithmic number theory}, volume 5011 of {\em Lecture Notes
  in Comput. Sci.}, pages 212--225. Springer, Berlin, 2008.

\bibitem[EJ15]{EJ2}
Andreas-Stephan Elsenhans and J\"org Jahnel.
\newblock On the characteristic polynomial of the {F}robenius on \'etale
  cohomology.
\newblock {\em Duke Math. J.}, 164(11):2161--2184, 2015.

\bibitem[Gab74]{Gab1}
A.~M. Gabri\`elov.
\newblock Bifurcations, {D}ynkin diagrams and the modality of isolated
  singularities.
\newblock {\em Funkcional. Anal. i Prilo\v zen.}, 8(2):7--12, 1974.

\bibitem[GLS07]{GLS}
Gert-Martin Greuel, Christoph Lossen, and Eugenii Shustin.
\newblock {\em Introduction to {S}ingularities and {D}eformations}.
\newblock Springer Monographs in Mathematics. Springer Berlin Heidelberg, 2007.

\bibitem[Has00]{hassett}
Brendan Hassett.
\newblock Special cubic fourfolds.
\newblock {\em Compositio Math.}, 120(1):1--23, 2000.

\bibitem[HT10]{flops}
Brendan Hassett and Yuri Tschinkel.
\newblock Flops on holomorphic symplectic fourfolds and determinantal cubic
  hypersurfaces.
\newblock {\em J. Inst. Math. Jussieu}, 9(1):125--153, 2010.

\bibitem[Kaz03]{Kazarian}
Maxim~{\'E}. Kazarian.
\newblock Multisingularities, cobordisms, and enumerative geometry.
\newblock {\em Russian Mathematical Surveys}, 58(4):665--724, 2003.

\bibitem[KKPY25]{KKPY}
Ludmil Katzarkov, Maxim Kontsevich, Tony Pantev, and Tony~Yue YU.
\newblock Birational invariants from hodge structures and quantum
  multiplication.
\newblock {\em arXiv:2508.05105 [math.AG]}, 2025.

\bibitem[Laz09]{Laz09}
Radu Laza.
\newblock The moduli space of cubic fourfolds.
\newblock {\em J. Algebraic Geom.}, 18(3):511--545, 2009.

\bibitem[LSV17]{LSV}
Radu Laza, Giulia Sacc\`a, and Claire Voisin.
\newblock A hyper-{K}\"{a}hler compactification of the intermediate {J}acobian
  fibration associated with a cubic 4-fold.
\newblock {\em Acta Math.}, 218(1):55--135, 2017.

\bibitem[MV25]{MV25}
Lisa Marquand and Sasha Viktorova.
\newblock The defect of a cubic threefold and applications to intermediate
  jacobian fibrations.
\newblock {\em International Mathematics Research Notices}, 2025(16):rnaf244,
  08 2025.

\bibitem[RV17]{RV17}
Kristian Ranestad and Claire Voisin.
\newblock Variety of power sums and divisors in the moduli space of cubic
  fourfolds.
\newblock {\em Doc. Math.}, 22:455--504, 2017.

\bibitem[Sac23]{sac2021birational}
Giulia Sacc{\`a}.
\newblock Birational geometry of the intermediate jacobian fibration of a cubic
  fourfold.
\newblock {\em Geom. Topol.}, 27(4):1479--1538, 2023.

\bibitem[Sam25]{sammarco25}
Elena Sammarco.
\newblock A nonspecial divisor in the moduli space of cubic fourfolds via
  10-nodal plane sextics.
\newblock {\em arXiv:2505.13187 [math.AG]}, 2025.

\bibitem[Vik26]{viktorova2023classification}
Sasha Viktorova.
\newblock On the classification of singular cubic threefolds.
\newblock {\em Trans. Amer. Math. Soc.}, 379(1):157--193, 2026.

\bibitem[vL07]{vanLuijik}
Ronald van Luijk.
\newblock K3 surfaces with {P}icard number one and infinitely many rational
  points.
\newblock {\em Algebra Number Theory}, 1(1):1--15, 2007.

\end{thebibliography}
\end{document}